\documentclass[reqno, 12pt,a4letter]{amsart}
\usepackage{amsmath,amsxtra,amssymb,latexsym, amscd,amsthm}
\usepackage[utf8]{inputenc}
\usepackage[mathscr]{euscript}
\usepackage{mathrsfs}
\usepackage[english]{babel}
\usepackage{color}
\usepackage[active]{srcltx}
\setlength{\parindent}{12pt}
\setlength{\parskip}{1.0pt}

\setlength{\oddsidemargin}{-.1cm}
\setlength{\evensidemargin}{-.1cm}

\setlength{\textwidth}{6.5in}
\setlength{\textheight}{9in}
\setlength{\headheight}{0in}

\setlength{\topmargin}{-1.5cm}
\setlength{\headsep}{1.25cm}
\setlength{\footskip}{.7in}

\setlength{\baselineskip}{15pt}

\newtheorem {theorem}{Theorem}[section]
\newtheorem {claim}{Claim}[section]
\newtheorem {corollary}{Corollary}[section]
\newtheorem {lemma}{Lemma}[section]

\newtheorem {definition}{Definition}[section]
\newtheorem {remark}{Remark}[section]
\newtheorem {proposition}{Proposition}[section]

%=============================================================================
\def\EES{{\accent"5E e}\kern-.5em\raise.8ex\hbox{\char'23 }}
\def\ow{o\kern-.42em\raise.82ex\hbox{
   \vrule width .12em height .0ex depth .075ex \kern-0.16em \char'56}\kern-.07em}
\def\OW{o\kern-.460em\raise1.36ex\hbox{
\vrule width .13em height .0ex depth .075ex \kern-0.16em
\char'56}\kern-.07em}
\def\DD{D\kern-.7em\raise0.4ex\hbox{\char '55}\kern.33em}
%=============================================================================

\title{\L ojasiewicz inequalities with explicit exponent for smallest singular value functions}

\author{S\~i-Ti\d{\^e}p \DD INH$^\dagger$}
\address{Institute of Mathematics, VAST, 18, Hoang Quoc Viet Road, Cau Giay District 10307, Hanoi, Vietnam}
\email{dstiep@math.ac.vn}

\author{Ti\EES n-S\OW n Ph\d{A}m$^\ddagger$}
\address{Department of Mathematics, University of Dalat, 1 Phu Dong Thien Vuong, Dalat, Vietnam}
\email{sonpt@dlu.edu.vn}

%\thanks{$^\dagger$This author is partially supported by Vietnam National Foundation for Science and Technology Development (NAFOSTED) grant 101.04-2014.23 and the Vietnam Academy of Science and Technology (VAST)}

%\thanks{$^\ddagger$This author was partially supported by Vietnam National Foundation for Science and Technology Development (NAFOSTED) grant number 101.04-2013.07.}

\subjclass{Primary 32B20; Secondary 14P}

\keywords{\L ojasiewicz inequality, polynomial matrices, smallest singular value functions}

\date{\today}

\begin{document}
\maketitle

\begin{abstract}
Let $F(x) := (f_{ij}(x))_{i=1,\ldots,p; j=1,\ldots,q},$ be a ($p\times q$)-real polynomial matrix and let $f(x)$ be the smallest singular value function of $F(x).$ %Define $\frak m_f(x) := \inf \{\|v\| \ : \ v \in {\partial} f(x)\},$
%We denote by ${\partial} f(x)$ the limiting subdifferential of $f$ at $x.$
In this paper, we first give the following {\em nonsmooth} version of \L ojasiewicz gradient inequality for the function $f$ with an explicit exponent: {\em For any $\bar x\in \Bbb R^n$, there exist $c > 0$ and $\epsilon > 0$ such that we have for all $\|x - \bar{x}\| < \epsilon,$
\begin{equation*}
\inf \{ \| w \| \ : \ w \in {\partial} f(x) \} \ \ge \ c\, |f(x)-f(\bar x)|^{1 - \frac{2}{\mathscr R(n+p,2d+2)}},
\end{equation*}
where ${\partial} f(x)$ is the limiting subdifferential of $f$ at $x$, $d:=\max_{i=1,\ldots,p; j=1,\ldots,q}\deg f_{i j}$ and  $\mathscr R(n, d) := d(3d - 3)^{n-1}$ if $d \ge 2$ and $\mathscr R(n, d) := 1$ if $d = 1.$} Then we establish some versions of \L ojasiewicz inequality for the distance function with explicit exponents, locally and globally, for the smallest singular value function $f(x)$ of the matrix $F(x)$.
\end{abstract}

\pagestyle{plain}

\tableofcontents

\section{Introduction}
Let $f$ be a real analytic function in a neighborhood of $\bar x$ in $\Bbb R^n$. Then the \L ojasiewicz gradient inequality~(\cite{Lojasiewicz1958, Lojasiewicz1965, Lojasiewicz1984}) states that there exist some constants $c>0$ and $\alpha\in [0, 1)$ such that, in a neighborhood of $\bar x$, we have
\begin{equation}\label{ClassicalLojasiewiczGradient}\|\nabla f(x)\|\ge c|f(x)-f(\bar x)|^\alpha.\end{equation}
The {\it \L ojasiewicz exponent} of $f$ at $\bar x$, denoted by $\alpha_f$, is the infimum of the exponents $\alpha$ for which Inequality~(\ref{ClassicalLojasiewiczGradient}) holds.

If $f$ is a real polynomial in $n$ variables and of degree $d$, D'Acunto and Kurdyka~\cite{Acunto2005} proved that $\alpha_f$ is bounded from above by a constant depending only on $n$ and $d$. Precisely, the following holds.

\begin{theorem} [\cite{Acunto2005}] \label{GradientInequality}
Let $f \colon \mathbb{R}^n \rightarrow \mathbb{R}$ be a polynomial function of degree $d$. Then there are some positive constants $c$ and $\epsilon$ such that
$$\|\nabla f(x)\| \ge c\, |f(x)-f(\bar x)|^{1 - \frac{1}{\mathscr R(n,d)}} \text{ for all }  x  \text{ such that } \|x-\bar x\|<\epsilon,$$
where
\begin{equation}\label{Eq1}
\mathscr R(n, d) :=
\begin{cases}
d(3d - 3)^{n-1} & \text{ if } \ d \ge 2,\\
1 & \text { if } \ d = 1.
\end{cases}
\end{equation}
\end{theorem}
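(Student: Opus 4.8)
The plan is to follow the strategy of D'Acunto and Kurdyka: reduce the inequality to the behaviour of $f$ along its \emph{talweg} --- the locus where $\|\nabla f\|^{2}$ is critical on the level sets of $f$ --- and then bound that behaviour by a Bézout-type degree estimate.

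\emph{Reductions.} After a translation we may assume $\bar x=0$, and since replacing $f$ by $f-f(0)$ alters neither side of the asserted inequality nor $\deg f$, we may also assume $f(0)=0$. If $d\le 1$ then $\mathscr R(n,d)=1$ and the claim reads $\|\nabla f(x)\|\ge c$ near $0$, which is clear (for $f$ a non-constant affine function $\nabla f$ is a nonzero constant; for $f$ constant it is vacuous). If $\nabla f(0)\neq 0$ the inequality is again trivial for small $\epsilon$, since then the left-hand side is bounded below and the right-hand side bounded above near $0$. Hence we assume $d\ge 2$ and $\nabla f(0)=0$.

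\emph{Reduction to the talweg.} Set $g:=\|\nabla f\|^{2}=\sum_{i=1}^{n}(\partial_i f)^{2}$, a polynomial of degree $\le 2(d-1)$, and let
\[
W:=\bigl\{\,x\in\mathbb R^{n}\ :\ \nabla g(x)\ \text{and}\ \nabla f(x)\ \text{are linearly dependent}\,\bigr\},
\]
the real algebraic set defined by the vanishing of the $2\times 2$ minors $\partial_i g\,\partial_j f-\partial_j g\,\partial_i f$ of the $2\times n$ matrix with rows $\nabla g,\nabla f$. Using $\deg\nabla g\le\deg g\le 2(d-1)$ and $\deg\nabla f\le d-1$, these minors are polynomials of degree $\le 3(d-1)=3d-3$. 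Fix a small closed ball $\bar B=\bar B(0,\epsilon_{0})$. For $0<|t|$ small, $g$ attains a minimum on the connected component of $f^{-1}(t)\cap\bar B$ lying near $0$; this component is contained in the interior of $\bar B$, so by the Lagrange multiplier rule the minimiser $a=a(t)$ belongs to $W$ and satisfies $f(a)=t$. Therefore, for every $x\in\bar B$ one has $\|\nabla f(x)\|\ge\|\nabla f(a(f(x)))\|$ with $a(f(x))\in W$ and $f(a(f(x)))=f(x)$, so it suffices to prove the inequality for $x\in W\cap\bar B$.

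\emph{The estimate on $W$.} Suppose it fails. Then the semialgebraic function $x\mapsto g(x)/|f(x)|^{\,2-2/\mathscr R(n,d)}$ has infimum $0$ over arbitrarily small neighbourhoods of $0$ in $W\setminus f^{-1}(0)$, so the curve selection lemma provides a real-analytic arc $\gamma\colon[0,\delta)\to W$ with $\gamma(0)=0$, $\gamma(s)\neq 0$ and $f(\gamma(s))\neq 0$ for $0<s<\delta$, and
\[
\frac{g(\gamma(s))}{|f(\gamma(s))|^{\,2-2/\mathscr R(n,d)}}\ \longrightarrow\ 0\qquad (s\to 0^{+}).
\]
Write $a:=\operatorname{ord}_{s}f(\gamma(s))$ (so $a\ge 1$, as $f(0)=0$) and $b:=\operatorname{ord}_{s}g(\gamma(s))$. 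Comparing orders in the last display gives $b>\bigl(2-2/\mathscr R(n,d)\bigr)a$, which rearranges to $\mathscr R(n,d)<2a/(2a-b)$. On the other hand, differentiating $f\circ\gamma$ and using $\operatorname{ord}_{s}\partial_i f(\gamma(s))\ge b/2$ for each $i$ (because $g=\sum_i(\partial_i f)^{2}$) together with $\operatorname{ord}_{s}\gamma_i'(s)\ge\operatorname{ord}_{s}\|\gamma(s)\|-1\ge 0$, we get $a-1=\operatorname{ord}_{s}\tfrac{d}{ds}f(\gamma(s))\ge b/2$, hence $2a-b\ge 2$ and $2a/(2a-b)\le a$. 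Finally, $a$ is the intersection multiplicity at $0$ of the degree-$d$ hypersurface $\{f=0\}$ with the branch of $W$ carrying $\gamma$, and an \emph{effective} form of the curve selection lemma --- obtained by an induction on $n$ with $n-1$ generic affine slices --- places this branch on an algebraic curve in $W$ of degree $\le(3d-3)^{n-1}$; Bézout's theorem then yields $a\le d\,(3d-3)^{n-1}=\mathscr R(n,d)$. Chaining these facts gives $\mathscr R(n,d)<2a/(2a-b)\le a\le\mathscr R(n,d)$, a contradiction. Hence the inequality holds on $W\cap\bar B$, and by the previous paragraph on a whole neighbourhood of $0$, with the asserted exponent.

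\emph{Main obstacle.} The order arithmetic above is routine; the heart of the matter is the bound $a\le\mathscr R(n,d)$, i.e.\ making the curve selection lemma \emph{effective} so that the talweg branch lies on an algebraic curve of degree at most $(3d-3)^{n-1}$ --- and it is here that the precise shape $\mathscr R(n,d)=d(3d-3)^{n-1}$ is produced: the degree-$d$ hypersurface $\{f=0\}$ supplies the factor $d$, while each of the $n-1$ steps of a generic-projection (slicing) induction supplies the factor $3d-3=3(d-1)$, which in turn forces the bookkeeping $\deg\nabla g\le 2(d-1)$, $\deg\nabla f\le d-1$ for the polynomials cutting out $W$. A secondary technical point is the verification used in the reduction step that the minimising component of $f^{-1}(t)\cap\bar B$ stays in the interior of $\bar B$ (handled by shrinking $\epsilon_{0}$ and by the fact that the level components through $0$ collapse onto $f^{-1}(0)$ as $t\to 0$), together with the degenerate cases in which $g$ is flat along a fibre, or $0$ is a non-isolated critical point of $f$, where the inequality is either trivially true or descends to a lower-dimensional situation.
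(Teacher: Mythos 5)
This statement is not proved in the paper at all: it is quoted verbatim from D'Acunto--Kurdyka \cite{Acunto2005} and used as a black box, so the only meaningful comparison is with the argument of that reference, which your sketch is clearly trying to reconstruct. Your reductions, the definition of $g=\|\nabla f\|^2$ and of the talweg-type variety $W$ via the $2\times 2$ minors of $(\nabla g,\nabla f)$ (degree $\le 3d-3$), and the order arithmetic $b>(2-2/\mathscr R)a$, $a-1\ge b/2$, hence $\mathscr R<2a/(2a-b)\le a$, are all correct and faithful to that strategy.

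The genuine gap is exactly the step you yourself label the ``heart of the matter'', and it is not merely an omitted lemma but a structural problem with the way you set it up. You first apply the ordinary curve selection lemma to the bad set $\{x\in W: g(x)<c^2|f(x)|^{2-2/\mathscr R}\}$ and only afterwards claim that the resulting arc lies on an algebraic curve in $W$ of degree at most $(3d-3)^{n-1}$. That claim cannot be justified as stated: $W$ may have dimension much larger than one, an arbitrary semialgebraic arc inside it need not lie on any algebraic curve of bounded degree (think of arcs in $W=\mathbb R^2$), and the bad set itself is not algebraic of controlled degree (its definition involves the non-polynomial function $|f|^{2-2/\mathscr R}$), so no B\'ezout-type bound attaches to an arc selected inside it. In \cite{Acunto2005} the logic runs the other way: one never selects a curve in the bad set, but instead \emph{constructs} a talweg curve of controlled degree by an iterated Lagrange-multiplier (polar-variety) procedure --- at each of $n-1$ steps one minimizes $g$ on the level sets inside the current stratum and adds new minor equations of degree $\le 3d-3$, cutting the dimension down to one --- and proves that along this curve $\|\nabla f\|$ is comparable to the minimum of $\|\nabla f\|$ on the level sets; only then do the order estimate and B\'ezout with $\{f=f(\bar x)\}$ give $a\le d(3d-3)^{n-1}$. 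Without this construction your chain $a\le\mathscr R(n,d)$ has no support, and the exponent is precisely the content of the theorem. A secondary, also unresolved, point is the interiority of the minimizer in your Lagrange step: your heuristic that the relevant components of $f^{-1}(t)\cap\bar B$ ``collapse onto $f^{-1}(0)$'' is false in general (already for $f=x^2-y^2$ the level components meet the boundary sphere), so the possibility that the minimum of $g$ on $f^{-1}(t)\cap\bar B$ occurs on the boundary must be handled explicitly (e.g.\ by adding the sphere constraint or by a separate lower bound for $\|\nabla f\|$ there), which again is done carefully in the cited source.
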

Consequently, the following \L ojasiewicz inequality with an explicit exponent for the distance function can be deduced easily the same way as in \cite{Lojasiewicz1984}: There are some positive constants $c$ and $\epsilon$ such that
$$|f(x)-f(\bar x)| \ge c\, \textrm{dist}(x,f^{-1}(\bar x))^{\frac{1}{\mathscr R(n,d)}} \quad \text{ for all } \quad x \quad \text{such that} \quad \|x-\bar x\|<\epsilon,$$
where $\textrm{dist}(\cdot, \cdot)$ denotes the Euclidean distance function.

Note that knowing the \L ojasiewicz exponent is important in theory and application (see \cite{Absil2006, Haraux2012, Kurdyka2000, Lojasiewicz1984, Nie2007, Schweighofer2004}). In the polynomial case, as far as we know, $1 - \frac{1}{\mathscr R(n,d)}$ is the best upper bound for $\alpha_f$.

In the case $f$ is semialgebraic continuous, which is not necessary smooth, inequalities of type~(\ref{ClassicalLojasiewiczGradient}) still exist if we replace $\|\nabla f(x)\|$ by $\frak m_f(x)$, which is the {\em nonsmooth slope} of $f$ at $x$ (see Definition~\ref{NonSmoothSlope}). In fact, inequalities of type ${\frak m}_f(x)\ge c|f(x)-f(\bar x)|^\alpha$ exist in a more general context, however, calculating or just giving an explicit upper bound for the \L ojasiewicz exponent, in general, is quite delicate.

In this paper, we propose a version of \L ojasiewicz gradient inequality with explicit exponent for the smallest singular value function of a given real polynomial matrix. We prove that the \L ojasiewicz exponent is bounded from above by a constant depending only on the degrees, the number of variables of the polynomials and the number of rows of the matrix. Precisely, our main result is the following.

\begin{theorem} \label{NonSmoothTheorem}
Let $\mathscr M(p,q)$ be the space of $(p \times q)$-matrices. Let $F \colon  \Bbb R^n \rightarrow \mathscr M(p,q),\ x\mapsto F(x)=(f_{ij}(x))_{i=1,\ldots,p;j=1,\ldots,q},$ be a $(p \times q)$-polynomial matrix such that $p \le q$ and $d := \max_{i =1, \ldots, p; j = 1, \ldots, q}\deg f_{i j}>0$. Let $f(x)$ be the corresponding smallest singular value function. Let $\bar x \in\Bbb R^n$. Then there exist some positive constants $c$ and $\epsilon$ such that
\begin{equation*}
\inf \{ \| w \| \ : \ w \in {\partial} f(x) \} \ \ge \ c\, |f(x)-f(\bar x)|^{1 - \frac{1}{\mathscr R(n+p,2d+2)}} \quad \text{ for all } x \in \mathbb{R}^n,  \|x-\bar x\|<\epsilon.
\end{equation*}
\end{theorem}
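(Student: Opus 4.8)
The plan is to reduce the assertion to the polynomial D'Acunto--Kurdyka inequality (Theorem~\ref{GradientInequality}) in $n+p$ variables, via the Rayleigh--Ritz description of the smallest singular value together with a little subdifferential calculus. Set $A(x):=F(x)F(x)^{T}$, a symmetric positive semidefinite polynomial matrix of degree $\le 2d$, and introduce
\[
\Phi(x,u)\ :=\ \|F(x)^{T}u\|^{2}\ =\ u^{T}A(x)u\ =\ \sum_{j=1}^{q}\Big(\sum_{i=1}^{p}f_{ij}(x)u_{i}\Big)^{2},\qquad(x,u)\in\mathbb{R}^{n}\times\mathbb{R}^{p},
\]
a polynomial of total degree $\le 2d+2$. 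Since $p\le q$, the eigenvalues of $A(x)$ are precisely the squared singular values of $F(x)$, so $f(x)^{2}=\lambda_{\min}(A(x))=\min_{\|u\|=1}\Phi(x,u)$. Write $h:=f^{2}$ --- it is locally Lipschitz, being $\lambda_{\min}$ composed with a polynomial matrix --- and $M(x):=\{u\in\mathbb{S}^{p-1}:\Phi(x,u)=h(x)\}$, the set of unit eigenvectors of $A(x)$ for $\lambda_{\min}(A(x))$; note that $u\in M(x)$ forces $A(x)u=h(x)u$.

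Next I would record two facts. \emph{(a)} For the minimum function $h$ one has $\partial h(x)\subseteq\{\nabla_{x}\Phi(x,u):u\in M(x)\}$. This is elementary: if $w$ is a regular subgradient of $h$ at $x$, comparing $h$ with $x'\mapsto\Phi(x',u)\ge h(x')$ (which agrees with $h$ at $x$) forces $w=\nabla_{x}\Phi(x,u)$ for every $u\in M(x)$; a limiting subgradient of $h$ at $x$ is a limit of regular subgradients along $x'\to x$, and by continuity of $h$, $\Phi$, $\nabla_{x}\Phi$ and compactness of $\mathbb{S}^{p-1}$ such a limit is again of the form $\nabla_{x}\Phi(x,u)$ with $u\in M(x)$. \emph{(b)} Wherever $f(x)>0$ the chain rule for the limiting subdifferential (composition with the $C^{1}$, increasing map $\sqrt{\,\cdot\,}$ near $h(x)>0$) gives $\partial f(x)=\tfrac{1}{2f(x)}\,\partial h(x)$. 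Hence, for $f(x)>0$,
\[
\inf\{\|w\|:w\in\partial f(x)\}\ \ge\ \frac{1}{2f(x)}\inf_{u\in M(x)}\|\nabla_{x}\Phi(x,u)\|,
\]
whereas points near $\bar x$ with $f(x)=0$ occur only if $f(\bar x)=0$, in which case the right-hand side of the claimed inequality is $0$ and there is nothing to prove.

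The heart of the matter is to bound $\inf_{u\in M(x)}\|\nabla_{x}\Phi(x,u)\|$ from below by a power of $|h(x)-h(\bar x)|$, and here is the key device. Put $c_{0}:=h(\bar x)$ and consider the \emph{shifted} polynomial
\[
\widetilde\Phi(x,u)\ :=\ \Phi(x,u)-c_{0}\|u\|^{2}\ =\ u^{T}\big(A(x)-c_{0}I\big)u,
\]
again of degree $\le 2d+2$ in $n+p$ variables, which satisfies $\widetilde\Phi(\bar x,\bar u)=h(\bar x)-c_{0}=0$ for every $\bar u\in M(\bar x)$. Applying Theorem~\ref{GradientInequality} to $\widetilde\Phi$ at each point $(\bar x,\bar u)$ with $\bar u\in M(\bar x)$ (and using that $\mathscr R$ is non-decreasing in its degree argument together with $|\widetilde\Phi|\le 1$ near these points, to replace the resulting exponent by $\beta:=1-\tfrac{1}{\mathscr R(n+p,2d+2)}$), then covering the compact set $\{\bar x\}\times M(\bar x)$ by finitely many of these neighborhoods and invoking the upper semicontinuity of $x\mapsto M(x)$ at $\bar x$, I obtain $c_{1}>0$ and $\epsilon_{1}>0$ with $\|\nabla\widetilde\Phi(x,u)\|\ge c_{1}|\widetilde\Phi(x,u)|^{\beta}$ for all $\|x-\bar x\|<\epsilon_{1}$ and all $u\in M(x)$. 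Evaluating at such $(x,u)$: from $A(x)u=h(x)u$ we get $\widetilde\Phi(x,u)=h(x)-c_{0}$, the partial gradient $\nabla_{u}\widetilde\Phi(x,u)=2\big(A(x)-c_{0}I\big)u=2\big(h(x)-c_{0}\big)u$ has norm $2|h(x)-h(\bar x)|$, and $\nabla_{x}\widetilde\Phi(x,u)=\nabla_{x}\Phi(x,u)$; hence
\[
\|\nabla_{x}\Phi(x,u)\|^{2}+4\,|h(x)-h(\bar x)|^{2}\ \ge\ c_{1}^{2}\,|h(x)-h(\bar x)|^{2\beta}.
\]
Since $\beta<1$, the term $4|h(x)-h(\bar x)|^{2}$ is of strictly higher order than $|h(x)-h(\bar x)|^{2\beta}$, so after shrinking $\epsilon_{1}$ (by continuity of $h$) it is absorbed, leaving $\|\nabla_{x}\Phi(x,u)\|\ge \tfrac{c_{1}}{\sqrt 2}\,|h(x)-h(\bar x)|^{\beta}$ for all $u\in M(x)$.

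Assembling: for $x$ near $\bar x$ with $f(x)>0$, the two displays above give $\inf\{\|w\|:w\in\partial f(x)\}\ge\tfrac{c_{1}}{2\sqrt 2\,f(x)}|h(x)-h(\bar x)|^{\beta}$, and from $|h(x)-h(\bar x)|=\big(f(x)+f(\bar x)\big)|f(x)-f(\bar x)|\ge f(x)\,|f(x)-f(\bar x)|$ this is $\ge\tfrac{c_{1}}{2\sqrt 2}f(x)^{\beta-1}|f(x)-f(\bar x)|^{\beta}$. If $f(\bar x)>0$, then $f(x)$ stays in $[\tfrac12 f(\bar x),2f(\bar x)]$ near $\bar x$, so $f(x)^{\beta-1}$ is bounded below by a positive constant; if $f(\bar x)=0$, then $|f(x)-f(\bar x)|=f(x)$ and the bound becomes $\tfrac{c_{1}}{2\sqrt 2}f(x)^{2\beta-1}\ge\tfrac{c_{1}}{2\sqrt 2}f(x)^{\beta}$ because $2\beta-1\le\beta$ and $f(x)<1$ near $\bar x$. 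In either case $\inf\{\|w\|:w\in\partial f(x)\}\ge c\,|f(x)-f(\bar x)|^{\beta}$ near $\bar x$, which is the assertion. The one genuinely delicate step is the shift in the third paragraph: at a minimizer the normal derivative $\nabla_{u}\Phi(x,u)=2h(x)u$ of the unshifted $\Phi$ has norm bounded away from $0$ when $f(\bar x)>0$, so a direct application of Theorem~\ref{GradientInequality} to $\Phi$ would give no control on $\nabla_{x}\Phi$; replacing $\Phi$ by $\widetilde\Phi$ makes this derivative vanish to exactly the order the absorption argument can swallow, uniformly in whether or not $f(\bar x)=0$.
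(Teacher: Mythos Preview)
Your proof is correct and follows essentially the same approach as the paper: your shifted polynomial $\widetilde\Phi$ is exactly the paper's $g$, your facts (a) and the computation $\nabla_{u}\widetilde\Phi(x,u)=2(h(x)-c_{0})u$ on $M(x)$ correspond to Claim~\ref{Claim3}, your compactness covering plus upper semicontinuity of $M$ is Claims~\ref{Claim4}--\ref{Claim5}, your absorption step is Claim~\ref{Claim6}, and your chain rule $\partial f=\tfrac{1}{2f}\partial h$ (for $f>0$) plays the role of Claim~\ref{Claim7}. The only cosmetic differences are that you invoke the $C^{1}$ chain rule directly rather than proving the inclusion $2f(x)\hat\partial f(x)\subseteq\hat\partial\tilde f(x)$ by hand, and you skip the paper's intermediate H\"older stability claim for $M$ (Claim~\ref{Claim2}), going straight to the upper semicontinuity that is actually needed.
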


The principal idea of the proof of Theorem~\ref{NonSmoothTheorem} is to produce a polynomial $g$ such that the limiting subdifferential of $f$ can be related to $\nabla g,$ and hence the nonsmooth slope ${\frak m}_f$ of $f$ can be estimated via $\|\nabla g\|$. Note that, recently, we have proved a similar result for largest eigenvalue functions of real symmetric matrices (see \cite{Dinh2016}), the techniques used in \cite{Dinh2016} permits also to prove Theorem \ref{NonSmoothTheorem} by considering the matrix $-F(x)F^T(x)$ and its corresponding maximal eigenvalue function $-f^2(x)$. However, the techniques used in this paper yields a much better exponent.

As a consequence of Theorem~\ref{NonSmoothTheorem}, we give a local (Proposition~\ref{CompactErrorBound}) and a global (Corollary~\ref{GlobalKollar}) version of \L ojasiewicz inequality for smallest singular value functions which bound the distance to the zero set by some explicit positive power of the function. Moreover, we give some versions of separation of semialgebraic sets associated to smallest singular value functions with explicit exponents (Corollaries~\ref{Separation} and~\ref{GlobalSeparation}). A global version of \L ojasiewicz inequality for tame singular value functions will be also established (Proposition~\ref{HolderTypeTheorem}).

The paper is organized as follows. Section~\ref{NonsmoothSlope} recalls some basic notions and results of nonsmooth analysis. The proof of Theorem~\ref{NonSmoothTheorem} is given in Section~\ref{ProofMain}. Sections~\ref{LocalLojasiewicz},~\ref{GlobalLojasiewicz}~and~\ref{TameLojasiewicz} contain some consequences of Theorem~\ref{NonSmoothTheorem} on various types of \L ojasiewicz inequality in local and global setups.

Throughout this paper, we denote by $\textrm{dist}(\cdot, \cdot)$ the Euclidean distance function. We denote $\mathbb{B}^n(x, \epsilon),\ \overline{\mathbb{B}}^n(x, \epsilon),$ and ${\mathbb{S}}^{n - 1}(x, \epsilon)$, respectively, the open ball, the closed ball and the sphere centered at $x$, of radius $\epsilon > 0$ in the Euclidean space ${\mathbb{R}}^{n}.$ In the case where $x = 0$ and $\epsilon = 1,$ we write $\mathbb{B}^n,\ \overline{\mathbb{B}}^n$ and ${\mathbb{S}}^{n - 1}$, respectively, instead of $\mathbb{B}^n(0, 1),\ \overline{\mathbb{B}}^n(0, 1)$ and ${\mathbb{S}}^{n - 1}(0, 1).$

\section{Nonsmooth slope}\label{NonsmoothSlope}
We first recall the notion of limiting subdifferential, that is, an appropriate multivalued operator playing the role of the usual gradient map. For nonsmooth analysis we refer to the comprehensive texts~\cite{Mordukhovich2006, Rockafellar1998}.

\begin{definition}{\rm
\begin{enumerate}
  \item[(i)] The {\em Fr\'echet subdifferential} $\hat{\partial} f(x)$ of a continuous function $f \colon {\Bbb R}^n \rightarrow {\Bbb R}$ at $x \in {\Bbb R}^n$ is given by
$$\hat{\partial} f(x) := \left \{ v \in {\Bbb R}^n \ : \ \liminf_{\| h \| \to 0, \ h \ne 0} \frac{f(x + h) - f(x) - \langle v, h \rangle}{\| h \|} \ge 0 \right \}.$$
  \item[(ii)] The {\em limiting subdifferential} at $x \in {\Bbb R}^n,$ denoted by ${\partial} f(x),$ is the set of all cluster points of sequences $\{v^k\}_{k \ge 1}$ such that $v^k\in \hat{\partial} f(x^k)$ and $x^k \to x$ as $k \to \infty.$
\end{enumerate}
}\end{definition}

The next lemma is well-known (see e.g., \cite{Mordukhovich2006, Rockafellar1998}).
\begin{lemma} \label{Lemma21}
Let $f \colon {\Bbb R}^n \rightarrow {\Bbb R}$ be a continuous function. The following statements hold:
\begin{enumerate}
\item[{\rm (i)}] If $x \in {\Bbb R}^n$ is a local (or global) minimum of $f$ then $0 \in {\partial} f(x).$

\item[{\rm (ii)}] Let $g \colon {\Bbb R}^n \rightarrow {\Bbb R}$ be a locally Lipschitz function. Then
$${\partial} (f + g)(x) \subseteq \partial f(x) + \partial g(x).$$

\item[(iii)] $\partial\|x\| =
\begin{cases}
\frac{x}{\|x\|} & \text{ if } \ x\not=0,\\
\overline {\Bbb B}^n & \text { if }\  x=0.
\end{cases}$
\end{enumerate}
\end{lemma}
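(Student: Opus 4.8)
\textbf{Proof proposal for Lemma~\ref{Lemma21}.}

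The plan is to prove the three assertions directly from the definitions of the Fr\'echet and limiting subdifferentials, using nothing beyond elementary limit arguments; each is standard but worth spelling out.

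For (i), suppose $x$ is a local minimum of $f$. Then for all $h$ with $\|h\|$ small we have $f(x+h) - f(x) \ge 0 = \langle 0, h \rangle$, so the difference quotient defining $\hat\partial f(x)$ is nonnegative for all small $h \ne 0$; hence its $\liminf$ is $\ge 0$ and $0 \in \hat\partial f(x) \subseteq \partial f(x)$. The global case is identical since a global minimum is in particular a local one.

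For (ii), I would first establish the Fr\'echet-level sum rule $\hat\partial(f+g)(x) \subseteq \hat\partial f(x) + \hat\partial g(x)$ when $g$ is differentiable at $x$ — indeed if $v \in \hat\partial(f+g)(x)$, then writing $g(x+h) = g(x) + \langle \nabla g(x), h\rangle + o(\|h\|)$ and subtracting shows $v - \nabla g(x) \in \hat\partial f(x)$; this would suffice if $g$ were $C^1$. For merely locally Lipschitz $g$, the cleaner route is to pass to the limiting subdifferential from the start. Take $w \in \partial(f+g)(x)$: there are $x^k \to x$ and $v^k \to w$ with $v^k \in \hat\partial(f+g)(x^k)$. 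The obstacle here is that $g$ need not be differentiable at the points $x^k$. To handle this I would instead invoke the fact that a locally Lipschitz $g$ is differentiable almost everywhere (Rademacher) together with the sum rule at points of differentiability, perturbing each $x^k$ slightly to a nearby point $y^k$ where $g$ is differentiable and $f+g$ still has a Fr\'echet subgradient close to $v^k$; then $v^k - \nabla g(y^k) \in \hat\partial f(y^k)$, the vectors $\nabla g(y^k)$ are bounded by the Lipschitz constant and (after a subsequence) converge to some $u \in \partial g(x)$, and $v^k - \nabla g(y^k) \to w - u \in \partial f(x)$. Honestly, the slickest and most self-contained argument is to quote the known Fr\'echet fuzzy sum rule or the exact limiting sum rule for a Lipschitz summand from \cite{Mordukhovich2006, Rockafellar1998}, since the paper explicitly allows this; I will present the perturbation argument as the mechanism and cite these references for the technical sum rule. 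The main subtlety to watch is exactly this nondifferentiability of $g$ along the defining sequence.

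For (iii), I would compute $\hat\partial\|\cdot\|$ and then take limits. When $x \ne 0$ the norm is differentiable with gradient $x/\|x\|$, so $\hat\partial\|x\| = \partial\|x\| = \{x/\|x\|\}$. When $x = 0$: the inclusion $\hat\partial\|0\| \subseteq \overline{\mathbb B}^n$ follows because $v \in \hat\partial\|0\|$ forces $\|h\| - \langle v, h\rangle \ge o(\|h\|)$, i.e.\ $\langle v, h/\|h\|\rangle \le 1 + o(1)$ for all small $h \ne 0$, hence $\|v\| \le 1$; conversely any $\|v\| \le 1$ satisfies $\|h\| - \langle v,h\rangle \ge \|h\|(1 - \|v\|) \ge 0$, so $v \in \hat\partial\|0\|$. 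Thus $\hat\partial\|0\| = \overline{\mathbb B}^n$, and since this set is already closed and the Fr\'echet subgradients at nearby nonzero points lie on the unit sphere $\subseteq \overline{\mathbb B}^n$, taking cluster points adds nothing: $\partial\|0\| = \overline{\mathbb B}^n$. This part is entirely elementary and presents no real obstacle.
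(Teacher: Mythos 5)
The paper does not actually prove this lemma: it is stated as well known with a pointer to \cite{Mordukhovich2006, Rockafellar1998}, so there is no in-paper argument to match yours against. Your parts (i) and (iii) are correct and complete as written: (i) is the immediate observation that $0\in\hat\partial f(x)\subseteq\partial f(x)$ at a local minimizer, and (iii) correctly computes $\hat\partial\|\cdot\|$ at $0$ and at nonzero points and checks that passing to cluster points adds nothing, since Fr\'echet subgradients at nearby nonzero points are unit vectors.

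For (ii), however, the ``perturbation mechanism'' you describe is not a proof, and the gap is exactly at the step you gloss over: given $v^k\in\hat\partial(f+g)(x^k)$, Rademacher gives you points of differentiability of $g$ arbitrarily close to $x^k$, but nothing guarantees that at such a point $y^k$ the sum $f+g$ has \emph{any} Fr\'echet subgradient, let alone one close in value to $v^k$; for a merely continuous $f$ the set where $\hat\partial(f+g)\neq\emptyset$ is only dense, and two dense (or dense plus full-measure) sets need not meet, nor does density give you control of the subgradient values. Producing a nearby point where both things hold simultaneously is essentially the content of the fuzzy sum rule, whose proof rests on a (smooth or Ekeland-type) variational principle rather than on Rademacher's theorem; your $C^1$ warm-up does not extend by this route. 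Your fallback — quoting the fuzzy sum rule or the exact limiting sum rule for a locally Lipschitz summand from \cite{Mordukhovich2006, Rockafellar1998} — is legitimate and is in effect exactly what the paper does, so the lemma is covered; just be aware that if you present the perturbation argument as the actual mechanism, it does not stand on its own.
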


\begin{definition}\label{NonSmoothSlope}{\rm
Using the limiting subdifferential $\partial f,$ we define the {\em
nonsmooth slope} of $f$ by
$${\frak m}_f(x) := \inf \{ \|v\| \ : \ v \in {\partial} f(x) \}.$$
By definition, ${\frak m}_f(x) = + \infty$ whenever ${\partial} f(x) = \emptyset.$
}\end{definition}

\begin{remark}{\rm
(i) It is a well-known result of variational analysis that
$\hat{\partial} f(x)$ (and a fortiori $\partial f(x)$) is not empty in a dense subset of the domain of $f$ (see~\cite{Rockafellar1998}, for example).

(ii) If the function $f$ is of class $C^1,$ the above notions coincides
with the usual concept of gradient; that is, ${\partial} f(x) =
\hat{\partial} f(x) = \{\nabla f(x) \},$ and hence ${\frak m}_f(x) =
\|\nabla f(x) \|.$

(iii) By Tarski--Seidenberg Theorem (see~\cite{Bochnak1998}), it is not hard to show that if the function $f$ is semi-algebraic then so is ${\frak m}_f.$
}\end{remark}

The following lemma will be useful in the sequel (see \cite[Corollary~2]{Ngai2009}).

\begin{lemma} \label{Lemma22}
Let $f \colon \mathbb{R}^n \rightarrow \mathbb{R}$ be a continuous function and let $\bar{x} \in \mathbb{R}^n$ be such that $f(\bar{x}) = 0.$ Let $S := \{x \in \mathbb{R}^n \ | \ f(x) \le 0\}.$ Assume that there are real numbers $c > 0, \delta > 0,$ and $\alpha \in [0, 1),$ such that 
$${\frak m}_f(x) \ge c |f(x)|^{\alpha} \quad \textrm{ for all } \quad \|x - \bar{x}\| \le \delta \ \textrm{ and } \ x \not \in S.$$
Then we have
\begin{eqnarray*}
\big[f(x)\big]_+^{1 - \alpha} &\ge& c(1 - \alpha)  \, \mathrm{dist}(x, S) \quad 
\mbox{ whenever } \quad \|x - \bar{x}\| \le \frac{\delta}{2},
\end{eqnarray*}
where $[f(x)]_+ := \max\{f(x), 0\}$ and $\mathrm{dist}(x, S)$ denotes the Euclidean distance from $x$ to $S.$ 
\end{lemma}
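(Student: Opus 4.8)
The plan is to deduce this from a one-variable integration argument applied along a decreasing ``gradient-flow-like'' path, exactly in the spirit of the original derivation of the distance inequality from the gradient inequality. Set $\phi(t) := t^{1-\alpha}/(c(1-\alpha))$, so that $\phi$ is increasing on $[0,\infty)$ with $\phi(0)=0$ and $\phi'(t) = t^{-\alpha}/c$, hence by hypothesis $\phi'(f(x)) \le 1/\mathfrak m_f(x)$ for $x$ near $\bar x$ with $f(x)>0$. The goal is to show $\phi(f(x)) \ge \mathrm{dist}(x,S)$ on the half-ball of radius $\delta/2$.

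First I would dispose of the trivial cases: if $f(x) \le 0$ then $x \in S$, so $\mathrm{dist}(x,S)=0$ and $[f(x)]_+ = 0$, and the inequality holds with both sides zero. So fix $x$ with $\|x-\bar x\| \le \delta/2$ and $f(x) > 0$. The idea is to run a curve from $x$ along which $f$ strictly decreases, using at each point a subgradient of smallest norm to dictate the direction, and to control the arc length of this curve by $\phi(f(x))$. Concretely, because $f$ is continuous (in fact semialgebraic, though continuity plus the curve selection / talweg arguments available in the semialgebraic setting suffice), one can construct an absolutely continuous curve $x(\cdot)$ with $x(0) = x$ such that $\frac{d}{ds} f(x(s)) \le -\mathfrak m_f(x(s))\cdot\|\dot x(s)\|$ and $\|\dot x(s)\| = 1$ wherever the curve is still in the region $\{f>0\}$ and inside the ball of radius $\delta$; this is the standard ``trajectory of the generalized gradient'' argument, and one must check the curve reaches $S$ (i.e. $f(x(s^*)) = 0$ for some finite $s^*$) before it can exit $\mathbb B^n(\bar x,\delta)$. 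Granting this, the arc length from $s=0$ to $s=s^*$ is $s^*$, and $\mathrm{dist}(x,S) \le s^*$ since $x(s^*)\in S$.

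Then I would estimate $s^*$ by the chain rule: along the curve,
\begin{equation*}
\frac{d}{ds}\,\phi(f(x(s))) \;=\; \phi'(f(x(s)))\,\frac{d}{ds}f(x(s)) \;\le\; \phi'(f(x(s)))\big(-\mathfrak m_f(x(s))\big) \;\le\; -1,
\end{equation*}
using $\phi'(f)\,\mathfrak m_f \ge 1$ from the hypothesis (valid since the curve stays in $\{f>0\}\cap \mathbb B^n(\bar x,\delta)$). Integrating from $0$ to $s^*$ gives $\phi(f(x(s^*))) - \phi(f(x)) \le -s^*$, i.e. $s^* \le \phi(f(x)) - \phi(0) = \phi(f(x))$. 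Combining, $\mathrm{dist}(x,S) \le s^* \le \phi(f(x)) = \frac{1}{c(1-\alpha)}\,[f(x)]_+^{1-\alpha}$, which rearranges to the claimed inequality. One still has to verify the ``radius bookkeeping'': the reason the half-ball appears is that the total length $s^*$ is bounded a priori by $\phi(f(x))$, and near $\bar x$ with $f(\bar x)=0$ and $f$ continuous one can shrink things so that $\phi(f(x))$ is small — but more robustly, the curve, having length at most $\phi(f(x)) \le \phi(\sup_{\overline{\mathbb B}(\bar x,\delta/2)} f)$, will indeed be shown to stay inside $\mathbb B^n(\bar x,\delta)$ after possibly replacing $\delta$ by a smaller constant; this is exactly the point where one invokes that the reference \cite{Ngai2009} records the precise statement.

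The main obstacle is the construction and regularity of the descent curve in the nonsmooth setting: one cannot simply integrate a vector field since $\partial f$ is set-valued and $\mathfrak m_f$ need not be continuous. The clean way is to invoke the machinery already available for semialgebraic (or more generally, ``talweg''-admissible) functions — selecting an absolutely continuous curve of near-maximal descent, as in Kurdyka's and Bolte--Daniilidis--Lewis' treatment of the nonsmooth \L ojasiewicz / Kurdyka--\L ojasiewicz inequality — and that is precisely why the statement is quoted from \cite[Corollary~2]{Ngai2009} rather than reproved here. Everything else (the one-variable integration, the case split, the distance bound $\mathrm{dist}(x,S)\le s^*$) is routine once the curve is in hand.
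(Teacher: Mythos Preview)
The paper does not prove this lemma at all: it is quoted verbatim from \cite[Corollary~2]{Ngai2009}, so there is no ``paper's own proof'' to compare against beyond that citation. What can be compared is the method behind the cited result, and there your sketch diverges in a way that creates a genuine gap.

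The lemma is stated for an arbitrary \emph{continuous} function $f$; semialgebraicity is not part of the hypotheses. Your construction of an absolutely continuous unit-speed curve of steepest descent, with $\frac{d}{ds}f(x(s))\le -\mathfrak m_f(x(s))$, is not available in that generality: for a merely continuous $f$ the limiting subdifferential may be empty on large sets, $\mathfrak m_f$ is only lower semicontinuous, and there is no talweg/curve-selection machinery to invoke. Your parenthetical ``in fact semialgebraic'' imports an assumption from the paper's later applications that the lemma itself does not make, so as a proof of the lemma as stated the argument is incomplete. The result in \cite{Ngai2009} is obtained instead via Ekeland's variational principle, which needs only lower semicontinuity; the paper itself uses exactly that device in Claim~\ref{Claim8} to prove a closely analogous global statement.

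A second, smaller issue is your radius bookkeeping. You propose to ``possibly replace $\delta$ by a smaller constant'' to keep the curve inside $\mathbb B^n(\bar x,\delta)$, but the lemma asserts the conclusion on the fixed half-ball $\|x-\bar x\|\le\delta/2$. The clean fix does not require shrinking anything: since $f(\bar x)=0$ one has $\bar x\in S$, hence $\mathrm{dist}(x,S)\le\|x-\bar x\|\le\delta/2$ automatically. Thus either the descent curve (were it to exist) reaches $S$ within arclength $\delta/2$, giving $\mathrm{dist}(x,S)\le s^*\le\phi(f(x))$, or it runs for length $\delta/2$ without reaching $S$, in which case $\phi(f(x))\ge\delta/2\ge\mathrm{dist}(x,S)$ directly. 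Either way the factor $\tfrac12$ is exact, not an artefact of further shrinking.
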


\section{Proof of the main result}\label{ProofMain}

%\begin{remark}{\rm
%Note that the existence of non-smooth \L ojasiewicz gradient inequality for sub-analytic functions was given by \cite{Bolte2007} without giving any precision on the exponent.
%}\end{remark}

First of all, it is not hard to see that the smallest singular value function can be expressed by the following formula
$$f(x) = \min_{y \in \mathbb{S}^{p - 1}} \left \| \sum_{i = 1}^p y_i F_i(x) \right\|,$$
where $F_i(x)$ stands for the $i^\text{th}$ row of the matrix $F(x).$ Let us define the function $g \colon \mathbb{R}^n \times \mathbb{R}^p \rightarrow \mathbb{R}, (x, y) \mapsto g(x, y),$ by
$$g(x, y) := \sum_{i = 1}^p \sum_{j = 1}^p  y_i y_j \langle F_i(x), F_j(x) \rangle-\sum_{i = 1}^p[f(\bar x)]^2y_i^2.$$
Clearly, $g$ is a polynomial in $n + p$ variables of degree at most $2d + 2$, recall that $d = \max_{i =1, \ldots, p; j = 1, \ldots, q}\deg f_{i j}.$ Let
$$\tilde{f}(x) := \min_{y \in \mathbb{S}^{p - 1}} g(x, y).$$
Then $\tilde{f}(x)=[f(x)]^2-[f(\bar x)]^2$ is the minimal value function of the matrix $F(x)F^T(x)-[f(\bar x)]^2I_p$, where $F^T(x)$ denotes the transpose of $F(x)$ and $I_p$ is the unit matrix of order $p$.
\begin{claim}\label{Claim1}
$\tilde f$ and $f$ are locally Lipschitz semi-algebraic functions.
\end{claim}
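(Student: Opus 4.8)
The plan is to prove the claim for $\tilde f$ first and then deduce it for $f$. For semialgebraicity: the function $\tilde f(x) = \min_{y \in \mathbb S^{p-1}} g(x,y)$ is obtained from the polynomial $g$ by a minimization over a compact semialgebraic set, so its graph $\{(x,t) : t = \min_{y \in \mathbb S^{p-1}} g(x,y)\}$ is the projection of a semialgebraic set and hence, by the Tarski--Seidenberg theorem (\cite{Bochnak1998}), $\tilde f$ is semialgebraic. Since $\tilde f(x) = [f(x)]^2 - [f(\bar x)]^2$ and $f \ge 0$, we recover $f(x) = \sqrt{\tilde f(x) + [f(\bar x)]^2}$, which exhibits $f$ as a composition of semialgebraic functions, hence $f$ is semialgebraic as well.

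For the local Lipschitz property: I would use the standard fact that a minimum of a locally Lipschitz family over a compact set is locally Lipschitz, with the Lipschitz constant controlled uniformly. Concretely, fix a compact neighborhood $K$ of $\bar x$; since each $(x,y) \mapsto g(x,y)$ is a polynomial, it is Lipschitz on the compact set $K \times \mathbb S^{p-1}$ with some constant $L$, so for $x, x' \in K$ and any $y \in \mathbb S^{p-1}$ we have $g(x,y) \le g(x',y) + L\|x - x'\|$; taking the minimum over $y$ gives $\tilde f(x) \le \tilde f(x') + L\|x-x'\|$, and by symmetry $|\tilde f(x) - \tilde f(x')| \le L\|x - x'\|$. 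Thus $\tilde f$ is locally Lipschitz. Then $f(x) = \sqrt{\tilde f(x) + [f(\bar x)]^2}$; if $f(\bar x) > 0$ the square root is smooth near the relevant values and $f$ is locally Lipschitz by composition. If $f(\bar x) = 0$, one cannot argue this way, so instead I would observe directly that $f(x) = \min_{y \in \mathbb S^{p-1}} \|\sum_i y_i F_i(x)\|$ is a minimum over a compact set of functions $x \mapsto \|\sum_i y_i F_i(x)\|$, each of which is locally Lipschitz (being the composition of a polynomial map with the norm), with a uniform local Lipschitz constant on $K \times \mathbb S^{p-1}$; the same min-of-Lipschitz argument as above then gives that $f$ is locally Lipschitz.

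The only delicate point is making the "minimum of a uniformly locally Lipschitz family is locally Lipschitz" step airtight, in particular ensuring the Lipschitz estimate for the family $x \mapsto \|\sum_i y_i F_i(x)\|$ is uniform in $y \in \mathbb S^{p-1}$; this follows because on $K \times \mathbb S^{p-1}$ the map $(x,y) \mapsto \sum_i y_i F_i(x)$ is $C^1$ with bounded derivative and $\|\cdot\|$ is $1$-Lipschitz, so composition yields a uniform bound. Everything else is a routine invocation of Tarski--Seidenberg.
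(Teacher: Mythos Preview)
Your proposal is correct and follows essentially the same approach as the paper, just with the details spelled out: the paper's proof is a two-line remark that Lipschitz continuity ``follows immediately from definitions'' and semialgebraicity ``thanks to Tarski--Seidenberg principle,'' and your argument is precisely an elaboration of those two assertions. The only unnecessary detour is the case split on $f(\bar x)$ for the Lipschitz property of $f$; since you already note that the direct representation $f(x)=\min_{y\in\mathbb S^{p-1}}\|\sum_i y_iF_i(x)\|$ yields the result by the same min-of-uniformly-Lipschitz argument, you can drop the square-root/composition route entirely.
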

\begin{proof}
Lipschitz continuity of the functions $\tilde f$ and $f$ follows immediately from definitions. Thanks to Tarski--Seidenberg principle (see e.g., \cite{Seidenberg1954, Tarski1931, Tarski1951}), these are semi-algebraic.
\end{proof}

For each $x \in \mathbb{R}^n$, we put
$$E(x) := \{y \in \mathbb{S}^{p - 1} \ : \ \tilde{f}(x) = g(x, y)\}.$$
Since the sphere $\mathbb{S}^{p - 1}$ is compact, $E(x)$ is a nonempty and compact set for all $x \in \mathbb{R}^n.$ Moreover, we have

\begin{claim} \label{Claim2}
The set-valued  map $E \colon \mathbb{R}^{n} \rightrightarrows \mathbb{R}^n, x \mapsto E(x),$ is locally H\"older stable; i.e., for any fixed $\bar{x} \in \mathbb{R}^{n}$ and $\epsilon>0$, there exist some positive constants $c$ and $\alpha$ such that
$$E(x) \subset E(\bar{x}) + c \|x - \bar{x}\|^{\alpha}\, \overline{\mathbb{B}}^p \quad \textrm{ for all } \quad x \in \Bbb{B}^n(\bar x,\epsilon).$$
\end{claim}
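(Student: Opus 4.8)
The plan is to translate the claim into a one-variable statement about a semialgebraic function. Observe first that $E(x)$ is the set of minimizers of the continuous function $g(x, \cdot)$ on the compact set $\mathbb{S}^{p - 1}$, so its graph $\{(x, y) \in \mathbb{R}^n \times \mathbb{R}^p \ : \ y \in E(x)\}$ is a semialgebraic set by the Tarski--Seidenberg theorem. Since each $E(x)$ is nonempty and compact and $y \mapsto \mathrm{dist}(y, E(\bar x))$ is continuous, the supremum defining $\rho(x) := \sup_{y \in E(x)} \mathrm{dist}(y, E(\bar x))$ is attained, $\rho$ is a semialgebraic function, $0 \le \rho(x) \le 2$ for all $x$ (because $E(x) \subset \mathbb{S}^{p - 1} \subset \overline{\mathbb{B}}^p$), and $\rho(\bar x) = 0$. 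With this notation the asserted inclusion $E(x) \subset E(\bar x) + c \|x - \bar x\|^{\alpha}\, \overline{\mathbb{B}}^p$ is precisely $\rho(x) \le c \|x - \bar x\|^{\alpha}$, so it suffices to produce such constants.

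Next I would check that $\rho$ tends to $0$ at $\bar x$. Fix a sequence $x_k \to \bar x$ and pick $y_k \in E(x_k)$ with $\mathrm{dist}(y_k, E(\bar x)) = \rho(x_k)$. By compactness of $\mathbb{S}^{p - 1}$ we may pass to a subsequence with $y_k \to y_*$; using the continuity of $g$ and of $\tilde f$ (Claim~\ref{Claim1}) together with $g(x_k, y_k) = \tilde f(x_k)$, we obtain $g(\bar x, y_*) = \tilde f(\bar x)$, hence $y_* \in E(\bar x)$ and $\rho(x_k) = \mathrm{dist}(y_k, E(\bar x)) \to 0$ along this subsequence. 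Since every subsequence admits a further subsequence along which $\rho$ converges to $0$, we conclude $\rho(x_k) \to 0$; thus $\lim_{x \to \bar x} \rho(x) = 0$.

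The endgame is the standard reduction to Puiseux expansions. Put $\theta(t) := \sup \{ \rho(x) \ : \ \|x - \bar x\| \le t \}$ for $t \ge 0$ small; this is a bounded, non-decreasing, semialgebraic function of one variable with $\theta(0) = 0$ and, by the previous paragraph, $\lim_{t \to 0^+} \theta(t) = 0$. Hence either $\theta \equiv 0$ near $0$, or its Puiseux expansion at $0$ has leading term $a\, t^{\beta}$ with $a > 0$ and $\beta \in \mathbb{Q}_{> 0}$; in both cases there are constants $c_0 > 0$, $\alpha > 0$ and $\epsilon_0 > 0$ with $\theta(t) \le c_0\, t^{\alpha}$ for $0 \le t < \epsilon_0$. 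Consequently $\rho(x) \le \theta(\|x - \bar x\|) \le c_0 \|x - \bar x\|^{\alpha}$ on $\mathbb{B}^n(\bar x, \epsilon_0)$. Finally, to upgrade the inequality from the small ball $\mathbb{B}^n(\bar x, \epsilon_0)$ to a prescribed ball $\mathbb{B}^n(\bar x, \epsilon)$, note that for $\epsilon_0 \le \|x - \bar x\| < \epsilon$ one has $\rho(x) \le 2 \le c \|x - \bar x\|^{\alpha}$ as soon as $c \ge \max\{c_0,\ 2 \epsilon_0^{-\alpha}\}$; with this choice of $c$ the desired inclusion holds for all $x \in \mathbb{B}^n(\bar x, \epsilon)$.

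As for difficulty, there is no serious obstacle here: the one delicate point is the outer semicontinuity argument of the second paragraph (ensuring $\rho \to 0$ at $\bar x$), which rests on the compactness of $\mathbb{S}^{p - 1}$ and the continuity of $g$ and $\tilde f$; everything else is routine semialgebraic bookkeeping plus the classical fact that a nonnegative semialgebraic function of one real variable that vanishes at $0$ is dominated near $0$ by a positive power of the variable.
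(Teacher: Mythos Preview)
Your proof is correct, but it proceeds along a different line from the paper's.

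The paper defines the auxiliary function
\[
H(x,y):=|g(x,y)-\tilde f(x)|+\Bigl|\sum_{i=1}^{p}y_i^2-1\Bigr|,
\]
so that $E(x)=\{y:H(x,y)=0\}$, and then invokes the classical \L ojasiewicz inequality on the compact set $\mathbb{S}^{p-1}$ to obtain $c\,\mathrm{dist}(y,E(\bar x))\le |H(\bar x,y)|^{\alpha}$. Combined with the Lipschitz bound $|H(x,y)-H(\bar x,y)|\le L\|x-\bar x\|$ on $\overline{\mathbb{B}}^n(\bar x,\epsilon)\times\mathbb{S}^{p-1}$, this yields the H\"older estimate in one stroke, without ever having to argue that $E$ is outer semicontinuous at $\bar x$.

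Your route instead packages everything into the scalar function $\rho(x)=\sup_{y\in E(x)}\mathrm{dist}(y,E(\bar x))$, proves directly that $\rho(x)\to 0$ as $x\to\bar x$ (the outer semicontinuity step), radializes to a one-variable semialgebraic function $\theta$, and finishes with a Puiseux expansion. This is a perfectly legitimate alternative; its advantage is that it appeals only to the elementary structure theory of one-variable semialgebraic functions rather than to the \L ojasiewicz inequality in several variables. The price is the extra compactness/continuity argument of your second paragraph, which the paper's approach bypasses entirely because the Lipschitz control of $H$ in $x$ does that work implicitly. Both arguments are short; the paper's is a bit slicker, yours a bit more self-contained.
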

\begin{proof} Let
\begin{displaymath}
\begin{array}{lcll}
$$H \colon &\mathbb{R}^n \times \mathbb{R}^p &\rightarrow &\mathbb{R}$$\\
$$         &(x, y)                           &\mapsto     &H(x,y):=|g(x,y)-\tilde f(x)|+\displaystyle\left|\sum_{i=1}^p y_i^2-1\right|.$$
\end{array}
\end{displaymath}
It is easy to check that the function $H$ is semi-algebraic and locally Lipschitz. Further, we have
\begin{displaymath}
\begin{array}{lllll}
$$ E(x)  & =  & \{y \in \Bbb S^{p-1} & : & g(x,y)-\tilde f(x) = 0\}$$ \\
$$       & =  & \{y \in \mathbb{R}^p & : & \displaystyle\sum_{i=1}^p y_i^2-1=0, \ g(x,y)-\tilde f(x) = 0 \}$$ \\
$$       & =  & \{y \in \mathbb{R}^p & : & H(x,y) = 0 \}.$$
\end{array}
\end{displaymath}
Since the sphere $\Bbb S^{p-1}$ is compact, it follows from the \L ojasiewicz inequality (see, for
example,~\cite{Bochnak1998}) that there are some constants $c > 0$ and $\alpha > 0$ such that
\begin{eqnarray*}
c\, \textrm{dist}(y , E(\bar{x})) & \le & |H(\bar{x}, y)|^\alpha \quad \textrm{ for all } \quad y \in \Bbb{S}^{p-1}.
\end{eqnarray*}
On the other hand, since the function $H$ is locally Lipschitz, it is globally Lipschitz on the compact set $\overline{\Bbb{B}}^n(\bar{x},\epsilon) \times \Bbb{S}^{p-1};$  in particular, there exists a constant $L > 0$ such that
 \begin{eqnarray*}
|H(x, y) - H(\bar{x}, y)|  & \le & L \|x - \bar{x}\| \quad \textrm{ for all } \quad (x, y) \in \mathbb{B}^n(\bar{x},\epsilon) \times \mathbb{S}^{p-1}.
\end{eqnarray*}

Let $x \in \mathbb{B}^n(\bar{x},\epsilon)$ and take an arbitrary  $y \in E(x).$ Then $H(x, y) = 0.$ Therefore,
 \begin{eqnarray*}
c\, \textrm{dist}(y , E(\bar{x}))
& \le & |H(\bar{x}, y)|^\alpha \\
& = & |H(x, y) - H(\bar{x}, y)|^\alpha \\
& \le & L^\alpha \|x - \bar{x}\|^\alpha .
 \end{eqnarray*}
This implies immediately the required statement.
\end{proof}

\begin{claim} \label{Claim3}
For all ${x} \in \mathbb{R}^n$ and all ${y} \in E({x}),$ the following statements hold:
\begin{enumerate}
\item[{\rm (i)}] $\tilde{f}({x}) = g({x}, {y}).$
\item[{\rm (ii)}] $\hat{\partial} \tilde{f}({x}) \subset \{\nabla_xg({x}, {y})\}.$ In particular, $\emptyset \ne {\partial} \tilde{f}({x}) \subset \cup_{z \in E(x)}\{\nabla_xg({x}, {z})\}$ and 
$${\frak m}_{\tilde{f}}({x}) \ge \inf_{z \in E(x)}\|\nabla_xg({x}, {z})\|.$$
\item[{\rm (iii)}] $\nabla_y g({x}, {y}) - 2 {\tilde{f}}({x}) {y} = 0.$
\end{enumerate}
\end{claim}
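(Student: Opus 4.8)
The plan is to verify the three items by unwinding the relevant definitions, the only genuine work being a limiting argument in part~(ii). Item~(i) is immediate: by construction $E(x)$ is the set of $y \in \mathbb{S}^{p-1}$ attaining the minimum defining $\tilde f(x)$. For the first assertion in~(ii), fix $y \in E(x)$ and consider the polynomial $\phi(x') := g(x', y)$. Since $y \in \mathbb{S}^{p-1}$ and $\tilde f$ is the pointwise minimum of $z \mapsto g(\cdot, z)$ over $\mathbb{S}^{p-1}$, we have $\phi(x') \ge \tilde f(x')$ for all $x'$, with equality at $x' = x$ by~(i). Hence for every $v \in \hat\partial \tilde f(x)$ and every $h \ne 0$,
$$\frac{\phi(x + h) - \phi(x) - \langle v, h \rangle}{\|h\|} \ \ge \ \frac{\tilde f(x + h) - \tilde f(x) - \langle v, h \rangle}{\|h\|},$$
so taking $\liminf$ as $h \to 0$ gives $v \in \hat\partial \phi(x)$; since $\phi$ is $C^1$, $\hat\partial \phi(x) = \{\nabla_x g(x, y)\}$, whence $\hat\partial \tilde f(x) \subset \{\nabla_x g(x, y)\}$.

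To pass to the limiting subdifferential, I first recall that $\partial \tilde f(x) \ne \emptyset$ because $\tilde f$ is locally Lipschitz (Claim~\ref{Claim1}): at points of differentiability one has $\nabla \tilde f \in \hat\partial \tilde f$, and these gradients stay bounded near $x$, so a cluster point of $\nabla \tilde f(x^k)$ along a sequence $x^k \to x$ of such points lies in $\partial \tilde f(x)$. Now take $v \in \partial \tilde f(x)$, say $v = \lim_k v^k$ with $v^k \in \hat\partial \tilde f(x^k)$ and $x^k \to x$. For each $k$ the set $E(x^k) \subset \mathbb{S}^{p-1}$ is nonempty by compactness, so pick $y^k \in E(x^k)$; the first part then gives $v^k = \nabla_x g(x^k, y^k)$. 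After passing to a subsequence, $y^k \to y^* \in \mathbb{S}^{p-1}$, and continuity of $\tilde f$ and of $g$ yields $\tilde f(x) = \lim_k \tilde f(x^k) = \lim_k g(x^k, y^k) = g(x, y^*)$, so $y^* \in E(x)$; moreover $v = \lim_k \nabla_x g(x^k, y^k) = \nabla_x g(x, y^*)$ since $\nabla_x g$ is continuous. Thus $\emptyset \ne \partial \tilde f(x) \subset \bigcup_{z \in E(x)} \{\nabla_x g(x, z)\}$, and taking norms and infima over $\partial \tilde f(x)$ gives ${\frak m}_{\tilde f}(x) \ge \inf_{z \in E(x)} \|\nabla_x g(x, z)\|$.

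For~(iii), fix $x$ and note that $g(x, y) = y^T A(x) y$ where $A(x) := F(x) F^T(x) - [f(\bar x)]^2 I_p$ is symmetric, so $\nabla_y g(x, y) = 2 A(x) y$. Since $y \in E(x)$ is a minimizer of $g(x, \cdot)$ on $\mathbb{S}^{p-1}$ and the constraint $\|y\|^2 = 1$ is regular (its gradient $2y$ is nonzero on the sphere), the Lagrange multiplier rule gives $\nabla_y g(x, y) = 2 \lambda y$ for some $\lambda \in \mathbb{R}$. Taking the inner product with $y$, using $\|y\| = 1$, the homogeneity identity $\langle \nabla_y g(x, y), y \rangle = 2 g(x, y)$, and~(i), we obtain $2\lambda = \langle \nabla_y g(x, y), y \rangle = 2 g(x, y) = 2 \tilde f(x)$, so $\lambda = \tilde f(x)$ and therefore $\nabla_y g(x, y) - 2 \tilde f(x) y = 0$.

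I expect the only delicate point to be the limiting argument in~(ii): it requires knowing $\partial \tilde f(x) \ne \emptyset$ and a closed-graph-type behavior linking the subgradients of $\tilde f$ at nearby points to the active set $E$, both of which I would obtain by combining local Lipschitz continuity of $\tilde f$ with continuity of $g$ and compactness of $\mathbb{S}^{p-1}$.
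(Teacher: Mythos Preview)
Your proof is correct and follows the same overall strategy as the paper: part~(i) is immediate, part~(ii) uses the inequality $g(\cdot,y)\ge\tilde f(\cdot)$ with equality at $x$ to trap the Fr\'echet subdifferential, and part~(iii) is Lagrange multipliers plus Euler's homogeneity identity. Your treatment of the Fr\'echet inclusion is in fact a bit cleaner than the paper's, which introduces the auxiliary function $\phi(h)=g(x+h,y)-\langle v,h\rangle+\epsilon\|h\|$ and applies a Fermat rule, whereas you compare difference quotients directly.

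The one place your argument genuinely diverges is in passing from $\hat\partial\tilde f$ to $\partial\tilde f$. The paper appeals to the Cell Decomposition Theorem to find a semi-algebraic open dense set on which $\tilde f$ is $C^1$, and then invokes Claim~3.2 (H\"older stability of $x\mapsto E(x)$) to control the limiting process. You bypass both: for nonemptiness you use only local Lipschitz continuity (Rademacher plus boundedness of gradients), and for the inclusion you run a direct compactness argument on $y^k\in E(x^k)\subset\mathbb{S}^{p-1}$, identifying the limit $y^*$ as a member of $E(x)$ via continuity of $g$ and $\tilde f$. This is more elementary and self-contained---it does not need semi-algebraicity or the H\"older stability of $E$---and is exactly the upper semicontinuity of $E$ that the paper later isolates as Claim~3.5. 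The paper's route, on the other hand, makes the role of the semi-algebraic structure explicit and ties in with the machinery already set up in Claim~3.2.
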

\begin{proof}
(i) Clearly.

(ii) Take arbitrary $v \in \hat{\partial} {\tilde{f}}({x}).$ By definition, for every $\epsilon > 0$ there exists $\delta > 0$ such that
$${\tilde{f}}({x} + h) - {\tilde{f}}({x}) - \langle v, h\rangle \ge - \epsilon \|h\|, \quad \textrm{ for all } \quad h \in \mathbb{B}^n(0, \delta).$$
Define the function $\phi \colon \mathbb{R}^n \rightarrow \mathbb{R}, h \mapsto \phi(h),$ by
$$\phi(h) := g({x} + h, {y}) - \langle v, h\rangle + \epsilon \|h\|.$$
We have for all $h \in \mathbb{B}^n(0, \delta),$
\begin{eqnarray*}
\phi(h)
&\ge& {\tilde{f}}({x} + h) - \langle v, h\rangle + \epsilon \|h\| \\
&\ge& {\tilde{f}}({x}) = g({x}, {y}) = \phi(0).
\end{eqnarray*}
Consequently, $0$ is a local minimum of $\phi.$ Then by Claim~\ref{Lemma21}, we have
$$0 \in \partial \phi(0) \subseteq \nabla_x g({x}, {y}) - v + \epsilon \overline{\mathbb{B}}^n.$$
Therefore, $\|\nabla_x g({x}, {y}) - v\| \le \epsilon.$ Letting $\epsilon \to 0$ yields $v = \nabla_x g({x}, {y}).$
Since this equality holds for all $v \in \hat{\partial} {\tilde{f}}({x}),$ we obtain
$\hat{\partial} {\tilde{f}}({x}) \subset \{\nabla_xg({x}, {y})\}.$
% and hence $$\hat{\partial} {\tilde{f}}({x}) \subset  \partial {\tilde{f}}({x}) \subset \{\nabla_xg({x}, {y})\}.$$

On the other hand, since the function $\tilde{f}$ is semialgebraic, it follows from Cell Decomposition Theorem (see~\cite{Bochnak1998, Dries1996}) that $f$ is of class $C^1$ on a semi-algebraic open dense set $U \subset \mathbb{R}^n.$ Then we have for all $x \in U,$
$$\hat{\partial} {\tilde{f}}({x}) = \partial {\tilde{f}}({x}) = \{\nabla_xg({x}, {y})\}.$$
Note that the function $g$ is of class $C^\infty$ and the set $E(x)$ is compact. Therefore, by Claim~\ref{Claim2} and by definition, we get that 
$\emptyset \ne {\partial} \tilde{f}({x}) \subset \cup_{z \in E(x)}\{\nabla_xg({x}, {z})\}$ for all $x \in \mathbb{R}^n.$

(iii) By definition, $\nabla_y g(x, y) = 2 F(x) F(x)^T y-2[f(\bar x)]^2y.$  Hence
$$\langle \nabla_y g({x}, {y}), {y} \rangle = \langle 2 F(x) F(x)^T y-2[f(\bar x)]^2y, y \rangle = 2 g({x}, {y}) = 2 {\tilde{f}}({x}).$$

On the other hand, be definition, we have
$$g({x}, {y}) = \tilde f(x) = \min_{\|z\|^2 = 1} g(x, z).$$
Thanks to Lagrange's multiplier theorem, there exists $\lambda \in \mathbb{R}$ such that
$$\nabla_y g({x}, {y}) - 2 \lambda {y} = 0.$$
Therefore
$$2 \lambda = 2 \lambda  \langle {y}, {y} \rangle = \langle \nabla_y g({x}, {y}), {y} \rangle = 2 {\tilde{f}}({x}),$$
which completes the proof.
\end{proof}

%Now let $\bar x \in\Bbb R^n$. %be such that $f(\bar x) = 0.$ Then $g(\bar{x}, \bar{y}) = f(\bar x) = 0.$

\begin{claim} \label{Claim4}
There exist some positive constants $c$ and $\epsilon'$ such that
\begin{equation*}
\|\nabla g(x, y)\| \ge c\, |g(x, y)-\tilde f(\bar x)|^{1 - \frac{1}{\mathscr R(n + p, 2d + 2)}}
\end{equation*}
for all $x \in \mathbb{B}^n(\bar{x}, \epsilon')$ and all $y \in \mathbb{R}^{p}$ with ${\rm dist}(y, E(\bar{x})) < \epsilon'.$
\end{claim}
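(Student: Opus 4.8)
\textbf{Proof proposal for Claim~\ref{Claim4}.}
The plan is to read off the estimate directly from Theorem~\ref{GradientInequality} applied to the polynomial $g$ in the $n+p$ variables $(x,y)$, and then to make the constants uniform by a compactness argument over the fibre $E(\bar x)$. The only real content is the patching; the inequality itself is nothing but the D'Acunto--Kurdyka bound for $g$ centered at points of the graph of $\tilde f$.

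First fix an arbitrary $\bar y \in E(\bar x)$, so that $g(\bar x, \bar y) = \tilde f(\bar x)$ by definition of $E(\bar x)$. Since $g$ is a polynomial in $n+p$ variables of degree at most $2d+2$, Theorem~\ref{GradientInequality} applied at the point $(\bar x,\bar y) \in \mathbb{R}^{n+p}$ yields constants $c_{\bar y}>0$ and $\epsilon_{\bar y}>0$ with
\begin{equation*}
\|\nabla g(x,y)\| \ \ge \ c_{\bar y}\,|g(x,y)-g(\bar x,\bar y)|^{1-\frac{1}{\mathscr R(n+p,\deg g)}} \ = \ c_{\bar y}\,|g(x,y)-\tilde f(\bar x)|^{1-\frac{1}{\mathscr R(n+p,\deg g)}}
\end{equation*}
for all $(x,y)$ with $\|(x,y)-(\bar x,\bar y)\| < \epsilon_{\bar y}$. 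Because $\deg g \le 2d+2$ and $\mathscr R(n+p,\cdot)$ is nondecreasing, we have $1-\frac{1}{\mathscr R(n+p,\deg g)} \le 1-\frac{1}{\mathscr R(n+p,2d+2)}$; after shrinking $\epsilon_{\bar y}$ so that $|g(x,y)-\tilde f(\bar x)| \le 1$ on $\mathbb{B}^{n+p}((\bar x,\bar y),\epsilon_{\bar y})$ (possible since $g$ is continuous and $g(\bar x,\bar y)=\tilde f(\bar x)$), raising the base to the larger exponent only decreases the right-hand side, so
\begin{equation*}
\|\nabla g(x,y)\| \ \ge \ c_{\bar y}\,|g(x,y)-\tilde f(\bar x)|^{1-\frac{1}{\mathscr R(n+p,2d+2)}} \quad\text{whenever}\quad \|(x,y)-(\bar x,\bar y)\| < \epsilon_{\bar y}.
\end{equation*}

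To globalize in $\bar y$, note that the balls $\mathbb{B}^p(\bar y,\epsilon_{\bar y}/2)$, $\bar y\in E(\bar x)$, cover the compact set $E(\bar x)$, so finitely many, centered at $\bar y^{(1)},\dots,\bar y^{(N)}$, suffice; set $\epsilon' := \tfrac14\min_{k}\epsilon_{\bar y^{(k)}}$ and $c := \min_{k} c_{\bar y^{(k)}}$. If $\|x-\bar x\| < \epsilon'$ and $\mathrm{dist}(y,E(\bar x)) < \epsilon'$, choose $z\in E(\bar x)$ with $\|y-z\|<\epsilon'$ and then $k$ with $\|z-\bar y^{(k)}\|<\epsilon_{\bar y^{(k)}}/2$; the triangle inequality gives $\|y-\bar y^{(k)}\| < \tfrac34\epsilon_{\bar y^{(k)}}$, hence $\|(x,y)-(\bar x,\bar y^{(k)})\|^2 < (\epsilon')^2 + \tfrac{9}{16}\epsilon_{\bar y^{(k)}}^2 < \epsilon_{\bar y^{(k)}}^2$, and the displayed inequality with center $(\bar x,\bar y^{(k)})$ gives the claim with these $c$ and $\epsilon'$. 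There is no serious obstacle: one must only be careful that the reference point in $y$ lies in $E(\bar x)$ (which forces the patching to run over $E(\bar x)$ rather than over a full neighborhood in $y$, and is exactly why Claim~\ref{Claim2} and the compactness of $E(\bar x)$ are recorded beforehand), and that the elementary monotonicity step above is used to replace $\deg g$ by the uniform bound $2d+2$ in the exponent.
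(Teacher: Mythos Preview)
Your proof is correct and follows essentially the same route as the paper's: apply Theorem~\ref{GradientInequality} to $g$ at each point $(\bar x,\bar y)$ with $\bar y\in E(\bar x)$, then patch the local estimates using compactness of $E(\bar x)$. You are in fact slightly more careful than the paper in two places: you make explicit the monotonicity step replacing $\mathscr R(n+p,\deg g)$ by $\mathscr R(n+p,2d+2)$ (the paper silently writes the latter), and your choice $\epsilon'=\tfrac14\min_k\epsilon_{\bar y^{(k)}}$ together with the explicit triangle-inequality check is tighter than the paper's $\epsilon'=\min_k \epsilon(\bar y^k)/2$. One small aside: your closing remark that Claim~\ref{Claim2} is ``recorded beforehand'' for this purpose is inaccurate---only the compactness of $E(\bar x)$ is used here, while the H\"older stability of $E$ is needed later in Claim~\ref{Claim3}(ii), not in this claim.
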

\begin{proof}
For any $\bar{y} \in E(\bar{x}),$ we have $\tilde f(\bar x)=g(\bar{x}, \bar{y})$. So by Theorem~\ref{GradientInequality},
there exist some positive constants $c(\bar{y})$ and $\epsilon(\bar{y})$  such that
\begin{equation*}
\|\nabla g(x, y)\| \ge c(\bar{y}) \, |g(x, y)-\tilde f(\bar x)|^{1 - \frac{1}{\mathscr R(n + p, 2d + 2)}} \quad
\textrm{ for } \quad \|(x, y)  -  (\bar{x}, \bar{y}) \| < \epsilon(\bar{y}).
\end{equation*}

Clearly, $E(\bar{x}) \subset \bigcup_{\bar{y} \in E(\bar{x})} \mathbb{B}^p \left (\bar{y}, \frac{\epsilon(\bar{y})}{2} \right).$ Since $E(\bar{x})$ is a compact set, there exist finite points $\bar{y}^k \in E(\bar{x}),$  for $k = 1, \ldots, N,$ such that
$$E(\bar{x}) \subset \bigcup_{k = 1}^N \mathbb{B}^p \left (\bar{y}^k, \frac{\epsilon(\bar{y}^k)}{2} \right).$$
Then the constants $c := \min_{k = 1, \ldots, N} c(\bar{y}^k)$ and
$\epsilon' := \min_{k = 1, \ldots, N} \frac{\epsilon(\bar{y}^k)}{2}$ have the desired properties.
\end{proof}

\begin{claim} \label{Claim5}
For each  $\epsilon' > 0$ there exists a positive constant $\epsilon < \epsilon'$ such that for all $x \in \mathbb{B}^n(\bar{x}, \epsilon)$ and all $y \in E(x),$ we have
$$\mathrm{dist}(y, E(\bar{x}))  < \epsilon'.$$
\end{claim}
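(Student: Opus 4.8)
The plan is to note that Claim~\ref{Claim5} is simply a reformulation of the (H\"older) upper semicontinuity of the set-valued map $E$ at $\bar x$ that was already established quantitatively in Claim~\ref{Claim2}. So the first step is to apply Claim~\ref{Claim2} at the point $\bar x$ with an arbitrary fixed radius, e.g. $\epsilon'$ itself: this produces constants $c > 0$ and $\alpha > 0$ such that $E(x) \subset E(\bar x) + c\|x - \bar x\|^{\alpha}\,\overline{\mathbb{B}}^p$ for every $x \in \mathbb{B}^n(\bar x, \epsilon')$.

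The second step is purely a matter of choosing $\epsilon$. Since $\alpha > 0$, the quantity $c\,t^{\alpha}$ tends to $0$ as $t \to 0^+$, so we may pick $\epsilon > 0$ with both $\epsilon < \epsilon'$ and $c\,\epsilon^{\alpha} < \epsilon'$ (for instance $\epsilon := \tfrac12\min\{\epsilon', (\epsilon'/c)^{1/\alpha}\}$, which gives $c\epsilon^\alpha \le \epsilon'/2^\alpha < \epsilon'$ because $2^\alpha > 1$). Then, for any $x \in \mathbb{B}^n(\bar x, \epsilon)$ and any $y \in E(x)$, the inclusion from Claim~\ref{Claim2} provides a point $y' \in E(\bar x)$ with $\|y - y'\| \le c\|x - \bar x\|^{\alpha} \le c\,\epsilon^{\alpha} < \epsilon'$, and therefore $\mathrm{dist}(y, E(\bar x)) \le \|y - y'\| < \epsilon'$, which is exactly the assertion.

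I do not anticipate any genuine difficulty here: all the substance sits in Claim~\ref{Claim2}, where the \L ojasiewicz inequality applied to the auxiliary semialgebraic function $H$ on the compact sphere $\mathbb{S}^{p-1}$, together with the local Lipschitz continuity of $H$, yielded the H\"older estimate. Claim~\ref{Claim5} merely restates that estimate in the $\epsilon$--$\epsilon'$ language that will be convenient in the next step, namely to guarantee (via Claim~\ref{Claim4}) that the pair $(x,y)$ with $x$ near $\bar x$ and $y \in E(x)$ stays inside the neighborhood $\{(x,y) : x \in \mathbb{B}^n(\bar x, \epsilon'),\ \mathrm{dist}(y, E(\bar x)) < \epsilon'\}$ on which the gradient inequality for $g$ holds.
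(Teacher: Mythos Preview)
Your proof is correct but takes a different route from the paper's. You deduce Claim~\ref{Claim5} from the quantitative H\"older estimate of Claim~\ref{Claim2}, which is logically clean since Claim~\ref{Claim5} is indeed just the qualitative upper semicontinuity statement implied by that estimate. The paper, by contrast, proves Claim~\ref{Claim5} directly by a standard compactness argument: if the conclusion failed one would have sequences $x^k \to \bar x$ and $y^k \in E(x^k)$ with $\mathrm{dist}(y^k, E(\bar x)) \ge \epsilon'$; passing to a convergent subsequence of $(y^k)$ in the compact sphere $\mathbb{S}^{p-1}$ and using the continuity of $g$ and $\tilde f$ forces the limit to lie in $E(\bar x)$, a contradiction. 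The paper's approach is more elementary in that it bypasses the \L ojasiewicz inequality invoked in the proof of Claim~\ref{Claim2} and uses only continuity and compactness; your approach is shorter given that Claim~\ref{Claim2} is already available, and it makes explicit the logical dependence between the two claims.
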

\begin{proof}
By contradiction, assume that there exist a number $\epsilon' > 0$ and some sequences $x^k \in \mathbb{R}^n$ and $y^k\in E(x^k)$ such that $\lim_{k \to \infty} x^k  = \bar{x}$ and
$$\mathrm{dist}(y^k, E(\bar{x})) \ge \epsilon'.$$
Since $E(x^k)$ is a subset of the compact set $\mathbb{S}^{p - 1},$ we may assume that the limit $\bar{y} := \lim_{k \to \infty} y^k \in \mathbb{S}^{p - 1}$ exists. Note that $\tilde{f}(x^k) = g(x^k, y^k).$ Hence, by continuity, we get $\tilde{f}(\bar{x}) = g(\bar{x}, \bar{y}),$ and so $\bar{y} \in E(\bar{x}),$ which is a contradiction.
\end{proof}

\begin{claim} \label{Claim6} 
Then there exist some positive constants $c$ and $\epsilon$ such that
\begin{equation*}
\frak m_{\tilde{f}}(x)\ge c\, |{\tilde{f}}(x)|^{1 - \frac{1}{\mathscr R(n + p, 2d + 2)}} \quad \text{ for all } \quad x \in \mathbb{B}^n(\bar{x}, \epsilon).
\end{equation*}
\end{claim}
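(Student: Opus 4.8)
The plan is to combine the gradient inequality for the polynomial $g$ obtained in Claim~\ref{Claim4} with the relation between the nonsmooth slope of $\tilde f$ and the partial gradient $\nabla_x g$ established in Claim~\ref{Claim3}(ii), localizing via Claim~\ref{Claim5} so that the point $y \in E(x)$ stays in the region where Claim~\ref{Claim4} applies. First I would fix the constant $\epsilon'$ and the constant $c_1 > 0$ coming from Claim~\ref{Claim4}, and then choose, by Claim~\ref{Claim5}, a radius $\epsilon \in (0, \epsilon')$ so small that for every $x \in \mathbb{B}^n(\bar x, \epsilon)$ and every $y \in E(x)$ we have $\mathrm{dist}(y, E(\bar x)) < \epsilon'$; this is exactly what licenses us to feed the pair $(x, y)$ into the conclusion of Claim~\ref{Claim4}.

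Next, for such $x$ and $y \in E(x)$, Claim~\ref{Claim4} gives
\begin{equation*}
\|\nabla g(x, y)\| \ \ge \ c_1\, |g(x, y) - \tilde f(\bar x)|^{1 - \frac{1}{\mathscr R(n+p, 2d+2)}},
\end{equation*}
and since $\tilde f(\bar x) = [f(\bar x)]^2 - [f(\bar x)]^2 = 0$ while $g(x, y) = \tilde f(x)$ by Claim~\ref{Claim3}(i), the right-hand side is $c_1 |\tilde f(x)|^{1 - 1/\mathscr R(n+p, 2d+2)}$. The point is now to bound $\|\nabla g(x,y)\|$ from above by a constant multiple of $\|\nabla_x g(x, y)\|$. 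Here I would use Claim~\ref{Claim3}(iii): $\nabla_y g(x, y) = 2\tilde f(x)\, y$, so $\|\nabla_y g(x, y)\| = 2 |\tilde f(x)|\, \|y\| = 2 |\tilde f(x)|$ because $y \in \mathbb{S}^{p-1}$. Since $\tilde f(x) \to 0$ as $x \to \bar x$, by shrinking $\epsilon$ further we may assume $2 |\tilde f(x)| \le |\tilde f(x)|^{1 - 1/\mathscr R(n+p, 2d+2)}$ whenever $|\tilde f(x)| < 1$ is small; hence $\|\nabla_y g(x, y)\| \le |\tilde f(x)|^{1 - 1/\mathscr R(n+p, 2d+2)}$ on a small enough ball. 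Combining with $\|\nabla g(x, y)\|^2 = \|\nabla_x g(x, y)\|^2 + \|\nabla_y g(x, y)\|^2$ gives
\begin{equation*}
\|\nabla_x g(x, y)\| \ \ge \ \sqrt{\|\nabla g(x,y)\|^2 - \|\nabla_y g(x,y)\|^2} \ \ge \ \sqrt{c_1^2 - 1}\ |\tilde f(x)|^{1 - \frac{1}{\mathscr R(n+p, 2d+2)}},
\end{equation*}
provided $c_1 > 1$, which we may assume after possibly decreasing the constant; if instead $c_1 \le 1$ one first replaces $c_1$ by a still-valid smaller constant — actually one wants $c_1 > 1$, so the cleaner route is to absorb a factor: write $2|\tilde f(x)| \le \tfrac{1}{2} c_1 |\tilde f(x)|^{1 - 1/\mathscr R}$ for $x$ near $\bar x$, so that $\|\nabla_y g\| \le \tfrac12 \|\nabla g\|$ and therefore $\|\nabla_x g\| \ge \tfrac{\sqrt3}{2}\|\nabla g\| \ge \tfrac{\sqrt3}{2} c_1 |\tilde f(x)|^{1 - 1/\mathscr R}$. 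Finally, taking the infimum over $z \in E(x)$ and invoking Claim~\ref{Claim3}(ii), which says ${\frak m}_{\tilde f}(x) \ge \inf_{z \in E(x)} \|\nabla_x g(x, z)\|$, yields the claimed bound with $c := \tfrac{\sqrt3}{2} c_1$.

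The main obstacle — really the only delicate point — is the passage from $\|\nabla g(x,y)\|$ to $\|\nabla_x g(x,y)\|$: a priori the gradient inequality for $g$ controls only the full gradient in all $n+p$ variables, whereas the subdifferential of $\tilde f$ sees only the $x$-partials. The key observation that makes it work is Claim~\ref{Claim3}(iii), which pins down $\nabla_y g(x,y)$ exactly as $2\tilde f(x) y$ on the constraint set, so its norm is of order $|\tilde f(x)|$ — strictly higher order than the exponent $1 - 1/\mathscr R(n+p,2d+2) < 1$ appearing on the right — and can therefore be absorbed after localizing. One should be slightly careful that all the shrinkings of $\epsilon$ (to stay inside $\epsilon'$, to make $|\tilde f(x)|$ small, to make the absorption inequality hold) are finite in number and compatible, which they are.
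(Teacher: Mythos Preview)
Your proof is correct and follows essentially the same approach as the paper's: both localize via Claims~\ref{Claim4} and~\ref{Claim5}, use Claim~\ref{Claim3}(iii) to identify $\|\nabla_y g(x,y)\| = 2|\tilde f(x)|$, and then absorb this lower-order term by shrinking $\epsilon$. The only cosmetic difference is that you split off $\|\nabla_y g\|$ from $\|\nabla g\|$ via the Pythagorean identity, whereas the paper uses an equivalence-of-norms bound $c_1\|\nabla g\| \le \|\nabla_x g\| + \|\nabla_y g\|$ to obtain $\mathfrak m_{\tilde f}(x) + 2|\tilde f(x)| \ge c_1 c\,|\tilde f(x)|^{1-1/\mathscr R(n+p,2d+2)}$ and then subtracts.
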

\begin{proof}
Let $c, \epsilon',$ and $\epsilon < \epsilon'$ be some positive constants such that Claims~\ref{Claim4}~and~\ref{Claim5} hold. Take arbitrary $x \in \mathbb{B}^n(\bar{x}, \epsilon ).$
Since the set $E(x)$ is compact, there exists a point $y \in E(x)$ such that $ \|\nabla_x g(x, y)\| = \inf_{z \in E(x)}  \|\nabla_x g(x, z)\|.$ It follows from Claim~\ref{Claim3} that
\begin{eqnarray*}
{\frak m}_{\tilde{f}}(x) &\ge& \|\nabla_x g(x, y)\| \quad \textrm{ and } \quad \nabla_y g(x, y) \ = \ 2 {\tilde{f}}(x) y.
\end{eqnarray*}
%Hence
%\begin{eqnarray*}
%\|\nabla g(x, y)\| &=& \|\left (\nabla_x g(x, y), \nabla_y g(x, y) \right)\|
%\end{eqnarray*}
Since, all norms on normed vector spaces of finite dimension are equivalent, there is a constant $c_1>0$ such that
\begin{eqnarray*}
c_1\|\left (\nabla_x g(x, y), \nabla_y g(x, y) \right)\| \ \le \| \nabla_x g(x, y)\|+ \|\nabla_y g(x, y) \| \ \le \  {\frak m}_{\tilde{f}}(x) + 2 |{\tilde{f}}(x)|.
\end{eqnarray*}
This, together with Claims~\ref{Claim4}~and~\ref{Claim5}, yields
\begin{eqnarray*}
{\frak m}_{\tilde{f}}(x) + 2 |{\tilde{f}}(x)| &\ge& c_1c |g(x, y)|^{1 - \frac{1}{\mathscr R(n + p, 2d + 2)}} \ = \ c_1c |{\tilde{f}}(x)|^{1 - \frac{1}{\mathscr R(n + p, 2d + 2)}}.
\end{eqnarray*}
Note that $\tilde{f}(\bar{x}) = 0.$ Hence, diminishing $\epsilon,$ if necessary, we may assume that
$$|{\tilde{f}}(x)|^{\frac{1}{\mathscr R(n + p, 2d + 2)}} < \frac{c_1c}{4} \quad \textrm{ for all } \quad x \in \mathbb{B}^n(\bar{x}, \epsilon).$$
Consequently, we obtain
\begin{eqnarray*}
{\frak m}_{\tilde{f}}(x) & \ge & \left (c_1c - 2 |{\tilde{f}}(x)|^{\frac{1}{\mathscr R(n + p, 2d + 2)}} \right) |{\tilde{f}}(x)|^{1 - \frac{1}{\mathscr R(n + p, 2d + 2)}} \\
&\ge& \frac{c_1c}{2} |{\tilde{f}}(x)|^{1 - \frac{1}{\mathscr R(n + p, 2d + 2)}},
\end{eqnarray*}
which completes the proof.
\end{proof}

\begin{claim} \label{Claim7}
We have for any $v \in \hat{\partial} {{f}}(x),$
$$2 f(x) v \in \hat{\partial} {\tilde{f}}(x).$$
In particular, if $f(x) > 0$ then
$${\frak m}_{{f}}(x) \ge \frac{1}{2f(x)} {\frak m}_{\tilde{f}}(x).$$
\end{claim}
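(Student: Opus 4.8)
The plan is to use the identity $\tilde f(x) = [f(x)]^2 - [f(\bar x)]^2$ recorded in Section~\ref{ProofMain}, so that $\tilde f$ and $f^2$ differ only by a constant, and combine it with the nonnegativity of $f$ and the one-variable chain rule for the square function.

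First I would fix $x$ and $v \in \hat{\partial} f(x)$, set $a := f(x) \ge 0$, and examine, for small $h$, the Fr\'echet difference quotient for $\tilde f$ at $x$ with candidate subgradient $2av$. Writing $t(h) := f(x+h) - f(x)$, the elementary identity $[f(x+h)]^2 - [f(x)]^2 = t(h)^2 + 2a\,t(h)$ shows that
\[
\tilde f(x+h) - \tilde f(x) - \langle 2av, h\rangle \;=\; t(h)^2 + 2a\bigl(f(x+h) - f(x) - \langle v, h\rangle\bigr).
\]
Dividing by $\|h\|$: the term $t(h)^2/\|h\|$ is nonnegative and, since $f$ is locally Lipschitz by Claim~\ref{Claim1}, bounded above by $L^2\|h\| \to 0$; the factor $2a$ is nonnegative because $f \ge 0$; and $\liminf_{h\to 0}\bigl(f(x+h) - f(x) - \langle v,h\rangle\bigr)/\|h\| \ge 0$ by definition of $\hat{\partial} f(x)$. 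Hence the $\liminf$ of the quotient is $\ge 0$, i.e. $2f(x)v \in \hat{\partial}\tilde f(x)$. (When $f(x) = 0$ this reads $0 \in \hat{\partial}\tilde f(x)$, which is immediate since the numerator is then $[f(x+h)]^2 \ge 0$.)

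For the ``in particular'' assertion, assume $f(x) > 0$ and take $v \in \partial f(x)$. Choose $x^k \to x$ and $v^k \in \hat{\partial} f(x^k)$ with $v^k \to v$; for $k$ large $f(x^k) > 0$ by continuity, so the first part applied at $x^k$ gives $2f(x^k)v^k \in \hat{\partial}\tilde f(x^k)$, and letting $k \to \infty$ in the definition of the limiting subdifferential yields $2f(x)v \in \partial\tilde f(x)$. Therefore $2f(x)\|v\| = \|2f(x)v\| \ge {\frak m}_{\tilde f}(x)$, and taking the infimum over $v \in \partial f(x)$ gives ${\frak m}_f(x) \ge \frac{1}{2f(x)}\,{\frak m}_{\tilde f}(x)$.

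There is no genuine obstacle here; the only point requiring care is the last step, where one must \emph{not} simply ``divide the inclusion $2f(x)v \in \hat{\partial}\tilde f(x)$ by $2f(x)$'' — the Fr\'echet subdifferential is not known to be stable under such scaling — but instead keep the (variable) constant $2f(x^k)$ inside the difference-quotient argument at each $x^k$ and pass to the limit only afterwards. Everything else is a routine manipulation of difference quotients.
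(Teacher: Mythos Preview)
Your argument is correct and follows essentially the same route as the paper: both compute the Fr\'echet quotient for $\tilde f = f^2 - f(\bar x)^2$ by an elementary quadratic identity (you write $b^2 - a^2 = (b-a)^2 + 2a(b-a)$, the paper multiplies the Fr\'echet inequality for $f$ by $f(x+h)+f(x)\ge 0$, which is the factoring $b^2-a^2=(b+a)(b-a)$) and then invoke the local Lipschitzness of $f$ from Claim~\ref{Claim1}. Your handling of the ``in particular'' step --- passing from $\hat\partial$ to $\partial$ by applying the first part at approximating points $x^k$ and taking limits --- is actually more explicit than the paper, which leaves this passage tacit; your closing caveat is slightly misphrased (scaling a set by $1/(2f(x))$ is harmless --- the genuine care is the Fr\'echet-to-limiting passage, which you do correctly), but this does not affect the proof.
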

\begin{proof}
Take arbitrary $v \in \hat{\partial} {{f}}({x}).$ By definition, for every $\epsilon > 0$, there exists $\delta > 0$ such that
$${{f}}({x} + h) - {{f}}({x}) - \langle v, h\rangle \ge - \epsilon \|h\|, \quad \textrm{ for all } \quad h \in \mathbb{B}^n(0, \delta).$$
Let $h \in \mathbb{B}^n(0, \delta).$ It follows from the fact that $f \ge 0$ that
\begin{eqnarray*}
{{f^2}}({x} + h) - {{f^2}}({x}) - \left(f( x + h) + f( x) \right) \langle v, h\rangle & \ge &  - \left (f( x + h) + f( x) \right)  \epsilon \|h\|.
\end{eqnarray*}
Hence
\begin{eqnarray*}
{\tilde{f}}({x} + h) - {\tilde{f}}({x}) - \langle 2f(x) v, h\rangle
& \ge & \left( f(x + h) - f(x) \right) \langle v, h\rangle - \left ( f(x + h) + f(x) \right )  \epsilon \|h\|.
\end{eqnarray*}
Note that $f$ is locally Lipschitz by Claim~\ref{Claim1}, so we have
\begin{equation*}
\begin{array}{lll}
\displaystyle \lim_{\|h\|\to 0}\frac{{\tilde{f}}({x} + h) - {\tilde{f}}({x}) - \langle 2f(x) v, h\rangle }{\|h\|}\\
\displaystyle \ge\lim_{\|h\|\to 0}\frac{\left( f(x + h) - f(x) \right) \langle v, h\rangle - \left ( f(x + h) + f(x) \right )  \epsilon \|h\|}{\|h\|}\\
\displaystyle =\lim_{\|h\|\to 0}\frac{- \left ( f(x + h) + f(x) \right )  \epsilon \|h\|}{\|h\|}=-2f(x)\epsilon.
\end{array}
\end{equation*}
Letting $\epsilon \to 0$ yields $\lim_{\|h\|\to 0}\frac{{\tilde{f}}({x} + h) - {\tilde{f}}({x}) - \langle 2f(x) v, h\rangle }{\|h\|}\ge 0$, and so $2 f(x) v \in \hat{\partial} {\tilde{f}}(x).$
\end{proof}

Now, we are in position to finish the proof of Theorem~\ref{NonSmoothTheorem}.
\begin{proof}[Proof of Theorem~\ref{NonSmoothTheorem}]
By Claim~\ref{Claim6}, there exist some positive constants $c$ and $\epsilon$ such that
\begin{equation*}
\frak m_{\tilde{f}}(x)\ge c\, |{\tilde{f}}(x)|^{1 - \frac{1}{\mathscr R(n + p, 2d + 2)}} \quad \text{ for all } \quad x \in \mathbb{B}^n(\bar{x}, \epsilon).
\end{equation*}
Therefore, by Claim~\ref{Claim7}, we have if $f(x) > 0$ then
\begin{eqnarray*}
{\frak m}_{{f}}(x)
\ \ge \ \frac{1}{2f(x)} {\frak m}_{\tilde{f}}(x) & \ge &
\frac{c}{2f(x)} \left|[f(x)]^2-[f(\bar x)]^2\right|^{1 - \frac{1}{\mathscr R(n + p, 2d + 2)}} \\
&  = & \frac{c\,|f(x)+f(\bar x)|^{1 - \frac{1}{\mathscr R(n + p, 2d + 2)}}}{2f(x)}|f(x)-f(\bar x)|^{1 - \frac{1}{\mathscr R(n + p, 2d + 2)}}.
\end{eqnarray*}
If $f(\bar x)=0$, by continuity and by shrinking $\epsilon$ if necessary, we may assume that $f(x)\le 1$ for $x \in \mathbb{B}^n(\bar{x},\epsilon)$, hence $${\frak m}_{{f}}(x) \ \ge \ \frac{c}{2} \left|f(x)\right|^{1 - \frac{2}{\mathscr R(n + p, 2d + 2)}} \ \ge \ \frac{c}{2} \left|f(x)\right|^{1 - \frac{1}{\mathscr R(n + p, 2d + 2)}}.$$
If $f(\bar x)\not=0$, then by shrinking $\epsilon$ if necessary, we may assume that $f(x)\not=0$ for $x \in \overline{\mathbb{B}}^n(\bar{x},\epsilon)$. Let $\tilde c:=\inf_{x \in \overline{\mathbb{B}}^n(\bar{x},\epsilon)}\frac{c\,|f(x)+f(\bar x)|^{1 - \frac{1}{\mathscr R(n + p, 2d + 2)}}}{2f(x)}>0$. Then ${\frak m}_{{f}}(x) \ge \tilde c|f(x)-f(\bar x)|^{1 - \frac{1}{\mathscr R(n + p, 2d + 2)}}.$ Theorem~\ref{NonSmoothTheorem} is proved.
\end{proof}

\begin{remark}{\rm
From the proof of Theorem~\ref{NonSmoothTheorem}, it is clear that if $f(\bar x)=0$, then there exist some positive constants $c$ and $\epsilon$ such that
\begin{equation}\label{NonSmoothLojasiewiczBetter}
\frak m_f(x)\ge c\, |f(x)|^{1 - \frac{2}{\mathscr R(n + p, 2d + 2)}} \quad \text{ for all } \quad x \in {\mathbb{B}}^n(\bar{x},\epsilon).
\end{equation}}
\end{remark}

%------------------------------------------------------------------------------------------------------------

\section{Local \L ojasiewicz inequality and local separation of semialgebraic sets} \label{LocalLojasiewicz}

%Let us define the semialgebraic function $f_+ \colon \mathbb{R}^n \rightarrow \mathbb{R}$ by
%$$[f(x)]_+ := \max\{f(x), 0\}.$$
The proof of Theorem~\ref{NonSmoothTheorem} allows us to deduce the following local \L ojasiewicz inequality for the smallest singular value function.

\begin{proposition}\label{CompactErrorBound}
Let $F$ and $f$ be as in Theorem~\ref{NonSmoothTheorem}. Then for any compact set $K \subset\Bbb R^n,$ there exists a constant $c > 0$ such that
\begin{equation}\label{Eq7}
c\, \mathrm{dist}(x, S_F) \le \Big(f(x)\Big)^{\frac{2}{\mathscr R(n+p, 2d+2)}} \quad \textrm{ for all } \quad x\in K,
\end{equation}
where $S_F := \{x \in {\Bbb R}^n \ : \ f(x) =  0\}$ and $\mathscr R(\cdot, \cdot)$ is defined by~(\ref{Eq1}).
\end{proposition}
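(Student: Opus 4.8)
The plan is to derive Proposition~\ref{CompactErrorBound} by combining the local nonsmooth \L ojasiewicz gradient inequality \eqref{NonSmoothLojasiewiczBetter} (valid at every zero of $f$) with the quantitative implication of Lemma~\ref{Lemma22}, and then upgrading the resulting pointwise statement to a uniform estimate on the compact set $K$ via a standard covering argument together with a compactness trick that absorbs the behaviour away from $S_F$.

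\medskip

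First I would fix a point $\bar x \in K \cap S_F$. Since $f(\bar x) = 0$ and $f \ge 0$, the remark following the proof of Theorem~\ref{NonSmoothTheorem} gives constants $c(\bar x) > 0$ and $\epsilon(\bar x) > 0$ with
$$
{\frak m}_f(x) \ \ge \ c(\bar x)\, |f(x)|^{1 - \frac{2}{\mathscr R(n+p,2d+2)}} \qquad \text{for all } x \in \mathbb{B}^n(\bar x, \epsilon(\bar x)).
$$
Set $\alpha := 1 - \frac{2}{\mathscr R(n+p,2d+2)} \in [0,1)$; note $1 - \alpha = \frac{2}{\mathscr R(n+p,2d+2)}$. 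Apply Lemma~\ref{Lemma22} with this $f$ (so $S = \{f \le 0\} = \{f = 0\} = S_F$ because $f \ge 0$): the hypothesis ${\frak m}_f(x) \ge c(\bar x)|f(x)|^{\alpha}$ holds for $\|x-\bar x\| \le \delta := \epsilon(\bar x)/2$ and $x \notin S_F$ (and trivially on $S_F$ the conclusion is vacuous), hence
$$
\big[f(x)\big]_+^{1-\alpha} \ \ge \ c(\bar x)(1-\alpha)\, \mathrm{dist}(x, S_F) \qquad \text{whenever } \|x-\bar x\| \le \delta/2.
$$
Since $f \ge 0$, $[f(x)]_+ = f(x)$, so on the ball $\mathbb{B}^n(\bar x, \epsilon(\bar x)/4)$ we get exactly an inequality of the form $\tilde c(\bar x)\,\mathrm{dist}(x,S_F) \le (f(x))^{2/\mathscr R(n+p,2d+2)}$.

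\medskip

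Next I would pass from these local balls to the compact set $K$. The family $\{\mathbb{B}^n(\bar x, \epsilon(\bar x)/4) : \bar x \in K \cap S_F\}$ is an open cover of the compact set $K \cap S_F$; extract a finite subcover indexed by $\bar x^1, \dots, \bar x^N$, and let $U := \bigcup_{k=1}^N \mathbb{B}^n(\bar x^k, \epsilon(\bar x^k)/4)$, an open neighbourhood of $K \cap S_F$. On $K \cap U$ the inequality \eqref{Eq7} holds with $c_1 := \min_{k} \tilde c(\bar x^k) > 0$. It remains to handle $K \setminus U$, which is a compact set disjoint from $S_F$; there $f$ is continuous and strictly positive, so $m := \min_{x \in K \setminus U} f(x) > 0$, while $\mathrm{dist}(x, S_F)$ is bounded above by $D := \mathrm{diam}(K) + \mathrm{dist}(K, S_F)$-type constant, in any case by some finite $D = \max_{x \in K}\mathrm{dist}(x,S_F)$. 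Hence on $K \setminus U$ one has $(f(x))^{2/\mathscr R(n+p,2d+2)} \ge m^{2/\mathscr R(n+p,2d+2)} \ge \frac{m^{2/\mathscr R(n+p,2d+2)}}{D}\,\mathrm{dist}(x,S_F) =: c_2\,\mathrm{dist}(x,S_F)$. Taking $c := \min\{c_1, c_2\}$ yields \eqref{Eq7} for all $x \in K$.

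\medskip

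The main obstacle, and the only place requiring care, is the bookkeeping in the covering step: one must make sure the Lojasiewicz-type exponent produced by Lemma~\ref{Lemma22} is genuinely uniform, i.e.\ that the \emph{same} exponent $2/\mathscr R(n+p,2d+2)$ — not merely some local exponent — comes out of \eqref{NonSmoothLojasiewiczBetter} at every zero $\bar x \in S_F$, which it does because that exponent depends only on $n$, $p$, $d$ and not on $\bar x$. A secondary subtlety is that Lemma~\ref{Lemma22} is stated for a point $\bar x$ with $f(\bar x) = 0$ and a one-sided sublevel set $\{f \le 0\}$; since here $f \ge 0$ identically, $\{f \le 0\}$ coincides with $S_F = f^{-1}(0)$, so the distance function appearing in the conclusion is exactly $\mathrm{dist}(\cdot, S_F)$, and no adjustment is needed. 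Everything else is routine compactness and continuity.
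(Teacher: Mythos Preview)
Your proof is correct and follows essentially the same approach as the paper: apply the local gradient inequality \eqref{NonSmoothLojasiewiczBetter} at each zero of $f$, convert it to a local distance estimate via Lemma~\ref{Lemma22}, cover by compactness, and handle the remaining compact piece disjoint from $S_F$ using strict positivity of $f$ there together with boundedness of $\mathrm{dist}(\cdot,S_F)$ on $K$. The only cosmetic difference is that the paper covers all of $K$ at once by balls which are either centered on $S_F$ or have closure disjoint from $S_F$, whereas you first cover $K\cap S_F$ and then treat $K\setminus U$ separately; both are equivalent.
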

\begin{proof}
% Theorem~\ref{CompactErrorBound} can be deduced straightly from a result on local error bounds for locally Lipschitz function by~\cite{Ngai2009}. Here, we present a different proof inspired by the proof of Theorem~2.2 in~\cite{Pham2012}. Note that the techniques used to prove Theorem 4.1 in \cite{Dinh2016}, based on estimating the length of trajectories
% of the subgradient dynamical system, can not be applied similarly here.

Since $K$ is compact, we can cover $K$ by finite open balls $\Bbb B^n(\bar x_i,\epsilon_i), i=1,\ldots, N,$ such that:
\begin{itemize}
\item Either $\bar x_i\in S_F$ or $\bar{\Bbb B}^n(\bar x_i,\epsilon_i)\cap S_F=\emptyset$;
\item If $\bar x_i\in S_F$ then Inequality~(\ref{NonSmoothLojasiewiczBetter}) holds in $\Bbb B^n(\bar x_i,2\epsilon_i)$.
\end{itemize}
It is clear that by taking $c$ small enough, Inequality~(\ref{Eq7}) holds for all $x\in \Bbb B^n(\bar x_i,\epsilon_i)$ with $\bar{\Bbb B}^n(\bar x_i,\epsilon_i)\cap S_F=\emptyset$ since $\inf_{x\in \Bbb B^n(\bar x_i,\epsilon_i)}[f(x)]_+=\inf_{x\in \bar{\Bbb B}^n(\bar x_i,\epsilon_i)}[f(x)]_+ > 0.$ On the other side, by Lemma~\ref{Lemma22}, Inequality~(\ref{Eq7}) holds for all $x\in \Bbb B^n(\bar x_i,\epsilon_i)$ with $\bar x_i\in S_F$. The proposition follows.
\end{proof}

Another consequence of Theorem~\ref{NonSmoothTheorem} is the following local separation of semialgebraic sets associated to  smallest singular value functions with an explicit exponent, the first and general version go back to \L ojasiewicz~\cite{Lojasiewicz1965} without any precision on the exponent.

\begin{corollary}\label{Separation}
Let $F \colon  \Bbb R^n \rightarrow \mathscr M(p_1,q_1),\ x\mapsto F(x)=(f_{ij}(x)),$ and $G \colon \Bbb R^n \rightarrow \mathscr M(p_2,q_2),\ x\mapsto G(x)=(g_{kl}(x)),$ be two polynomial matrices with $p_1\le q_1$ and $p_2\le q_2$. Let $f$ and $g$ be the corresponding smallest singular value functions. Set
$$S_F := \{x \in {\Bbb R}^n \ : \ f(x) = 0\} \quad \textrm{ and } \quad   S_G := \{x \in {\Bbb R}^n \ : \ g(x) =  0\},$$
and assume that $S_F\cap S_G \ne\emptyset.$ Then for any compact set $K \subset\Bbb R^n,$ there exists a constant $c>0$ such that 
$$c\, \mathrm{dist}(x,S_F\cap S_G) \le \Big (\mathrm{dist}(x,S_F)+\mathrm{dist}(x,S_G) \Big)^{\frac{2}{\mathscr R(n+p_1+p_2, 2d+2)}} \quad \textrm{ for all } \quad x \in K,$$
where $\displaystyle d := \max\{\deg f_{i j},\deg g_{k l}:\ i = 1, \ldots, p_1,\ j = 1, \ldots,\ q_1,\ k = 1, \ldots, p_2,\ l = 1, \ldots, q_2\}.$
\end{corollary}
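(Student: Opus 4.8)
The plan is to reduce the separation statement to a single application of Proposition~\ref{CompactErrorBound}. The key observation is that the smallest singular value function of the block-diagonal matrix
$$
H(x) := \begin{pmatrix} F(x) & 0 \\ 0 & G(x) \end{pmatrix},
$$
which is a $((p_1+p_2)\times(q_1+q_2))$-polynomial matrix with $p_1+p_2 \le q_1+q_2$ and whose entries have degree at most $d$, satisfies $h(x) = \min\{f(x), g(x)\}$. Indeed, using the formula $h(x) = \min_{z \in \mathbb{S}^{p_1+p_2-1}} \|\sum z_i H_i(x)\|$ and the fact that the rows of $H$ split into the $F$-block and the $G$-block, the minimum is attained either by putting all the weight on the $F$-rows (giving $f(x)$) or all on the $G$-rows (giving $g(x)$), and a convexity/norm argument shows no mixed choice does better. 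Hence $S_H := \{h = 0\} = S_F \cup S_G$, and $S_F \cap S_G$ is exactly $\{x : f(x) = 0 \text{ and } g(x) = 0\}$; note this is \emph{not} $S_H$, so a direct application to $H$ is not quite what we want — we need the zero set of $\max\{f,g\}$, not $\min\{f,g\}$.

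To fix this I would instead work with the function $\varphi(x) := f(x) + g(x)$ (or equivalently $\max\{f,g\}$, which is comparable to it up to a factor of $2$), whose zero set is precisely $S_F \cap S_G$. The difficulty is that $\varphi$ is not itself a smallest singular value function of a polynomial matrix, so Proposition~\ref{CompactErrorBound} does not apply to it directly. Here is where I would use the block matrix $H$ together with the elementary inequalities
$$
\tfrac12\big(f(x)+g(x)\big) \ \le\ \max\{f(x),g(x)\} \quad\text{and}\quad h(x) = \min\{f(x),g(x)\}.
$$
Applying Proposition~\ref{CompactErrorBound} to the matrix $F$ alone on $K$ gives $c_1\,\mathrm{dist}(x,S_F) \le f(x)^{2/\mathscr R(n+p_1,2d+2)}$, and similarly for $G$; but to get the \emph{intersection} I would instead apply the proposition to $H$ but centered at points of $S_F\cap S_G$, exploiting that near such a point $\bar x$ the better local inequality~(\ref{NonSmoothLojasiewiczBetter}) for $h$ reads $\frak m_h(x) \ge c\,|h(x)|^{1-2/\mathscr R(n+p_1+p_2,2d+2)}$, and then relate $\mathrm{dist}(x, S_F\cap S_G)$ to $h$ via Lemma~\ref{Lemma22} — except that Lemma~\ref{Lemma22} bounds $\mathrm{dist}(x, \{h\le 0\}) = \mathrm{dist}(x, S_H)$, again the wrong set.

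The clean route, which I would ultimately follow, is: apply Proposition~\ref{CompactErrorBound} separately to $F$ and to $G$, but at the scale $n+p_1+p_2$ (the bound $\mathscr R$ is monotone in its first argument, so weakening $n+p_i$ to $n+p_1+p_2$ only weakens the constant in the exponent, which is allowed), obtaining on the compact set $K$
$$
c_1\,\mathrm{dist}(x,S_F)^{\mathscr R(n+p_1+p_2,2d+2)/2} \le f(x), \qquad c_2\,\mathrm{dist}(x,S_G)^{\mathscr R(n+p_1+p_2,2d+2)/2} \le g(x).
$$
Then I would invoke the standard \L ojasiewicz fact — provable by another application of Lemma~\ref{Lemma22} to $h = \min\{f,g\}$ on $K$, whose zero set is $S_F\cup S_G\supseteq$ nothing useful... so instead I use that $\mathrm{dist}(x, S_F\cap S_G)$ is controlled by a power of $\mathrm{dist}(x,S_F)+\mathrm{dist}(x,S_G)$ by the classical \L ojasiewicz separation inequality for the closed semialgebraic sets $S_F, S_G$, with \emph{exponent} supplied by Proposition~\ref{CompactErrorBound} applied to $H$: since $\mathrm{dist}(x,S_H) = \min\{\mathrm{dist}(x,S_F),\mathrm{dist}(x,S_G)\}$ is not what appears either, the honest statement is that one applies Proposition~\ref{CompactErrorBound} to the matrix $H$ to get $c\,\mathrm{dist}(x, S_F\cap S_G) \le (\text{something})^{2/\mathscr R(n+p_1+p_2,2d+2)}$ where the ``something'' is the smallest singular value of the matrix whose zero set is $S_F\cap S_G$ — namely the $((p_1+p_2)\times(q_1+q_2))$ matrix $H$ does \emph{not} have this zero set, but stacking: consider $\tilde H(x)$ the $((p_1+p_2)\times \max(q_1,q_2))$ matrix obtained by \emph{vertically concatenating} $F$ and $G$ (padding with zero columns), whose smallest singular value vanishes exactly on $S_F \cap S_G$ when ... no, vertical concatenation makes the singular value vanish where the combined row span drops rank, which is again not $S_F\cap S_G$ in general.

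Given these subtleties, the cleanest correct plan is: apply Proposition~\ref{CompactErrorBound} to $F$ and to $G$ individually (with the first argument of $\mathscr R$ inflated to $n+p_1+p_2$ by monotonicity), add the two resulting inequalities after raising to suitable powers and using $a+b \le 2\max\{a,b\}$ and concavity of $t\mapsto t^{2/\mathscr R}$ to get $c'\big(\mathrm{dist}(x,S_F)+\mathrm{dist}(x,S_G)\big) \le \big(f(x)+g(x)\big)^{2/\mathscr R(n+p_1+p_2,2d+2)}$... this still has the distances on the wrong side. The \textbf{main obstacle}, and the real content of the corollary, is precisely passing from ``$\mathrm{dist}$ to $S_F$ is small and $\mathrm{dist}$ to $S_G$ is small'' to ``$\mathrm{dist}$ to $S_F\cap S_G$ is small''. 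I expect the authors handle this by applying Proposition~\ref{CompactErrorBound} to the block matrix $H$ \emph{and} reading off that $S_H = S_F\cup S_G$, combined with the classical \L ojasiewicz separation inequality $\mathrm{dist}(x,S_F\cap S_G) \le C\big(\mathrm{dist}(x,S_F)+\mathrm{dist}(x,S_G)\big)^{\gamma}$ where $\gamma$ is now made explicit: on $K$, write $u(x):=f(x)+g(x)$ (zero set $S_F\cap S_G$) and bound $\mathrm{dist}(x, S_F\cap S_G)$ via Lemma~\ref{Lemma22} applied to the function $u - f(\bar x)\cdots$; since $u$ is a sum of two smallest-singular-value functions it is the smallest singular value function of $H$ only up to the $\min$ vs. $\max$ issue, so the exponent that comes out is governed by $\mathscr R(n+p_1+p_2, 2d+2)$, and the factor $\mathrm{dist}(x,S_F)+\mathrm{dist}(x,S_G)$ on the right is then introduced by bounding $u(x) = f(x)+g(x)$ from above using the \emph{reverse} (trivial, Lipschitz) direction $f(x) \le L\,\mathrm{dist}(x,S_F)$, valid because $f$ is locally Lipschitz and vanishes on $S_F$. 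Assembling: $c\,\mathrm{dist}(x, S_F\cap S_G) \le u(x)^{2/\mathscr R} = (f(x)+g(x))^{2/\mathscr R} \le (L\,\mathrm{dist}(x,S_F)+L\,\mathrm{dist}(x,S_G))^{2/\mathscr R}$, which is the claim after absorbing $L^{2/\mathscr R}$ into the constant. The one genuine gap to fill carefully is the first inequality $c\,\mathrm{dist}(x,S_F\cap S_G)\le (f(x)+g(x))^{2/\mathscr R(n+p_1+p_2,2d+2)}$, which I would prove by covering $K$ by small balls as in the proof of Proposition~\ref{CompactErrorBound}, using~(\ref{NonSmoothLojasiewiczBetter}) and Lemma~\ref{Lemma22} for the function $f+g$ near points of $S_F\cap S_G$ — and here one needs that the nonsmooth slope of $f+g$ at such points inherits a \L ojasiewicz bound with exponent $1-2/\mathscr R(n+p_1+p_2,2d+2)$, which follows because $f+g$ is, locally near $S_F\cap S_G$, comparable to the smallest singular value function $h$ of the block matrix $H$ (both vanish exactly on $S_F\cap S_G$ locally when one of $S_F, S_G$ contains the other near $\bar x$, and in the general case a short argument with Lemma~\ref{Lemma21}(ii) on the sum handles it). That comparison, and verifying the subdifferential sum rule gives the slope estimate for $f+g$, is the step I expect to require the most care.
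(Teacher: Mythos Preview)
Your final outline is exactly right: first establish
\[
c\,\mathrm{dist}(x,S_F\cap S_G)\ \le\ \big(f(x)+g(x)\big)^{\frac{2}{\mathscr R(n+p_1+p_2,2d+2)}}\quad\text{on }K,
\]
and then bound $f(x)\le L\,\mathrm{dist}(x,S_F)$, $g(x)\le L\,\mathrm{dist}(x,S_G)$ by Lipschitzness on a compact neighborhood of $K$. This is precisely the structure of the paper's proof. The problem is the step you flag as the ``genuine gap'', and neither of the mechanisms you propose for it works.

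The subdifferential sum rule $\partial(f+g)\subset\partial f+\partial g$ gives \emph{no} lower bound on $\frak m_{f+g}$: an element of $\partial(f+g)$ could be a sum $v+w$ with $v\in\partial f$, $w\in\partial g$ that nearly cancel, so knowing $\frak m_f,\frak m_g$ separately says nothing. And the block-diagonal matrix $H$ has smallest singular value $\min\{f,g\}$, which is not comparable to $f+g$ near $S_F\cap S_G$ (take $f(x)=x^2$, $g(x)=x^4$) and whose zero set is $S_F\cup S_G$, so any \L ojasiewicz-type conclusion drawn from it concerns the wrong set.

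The paper's fix is to work not with $f+g$ but with $h(x):=f^2(x)+g^2(x)$, whose zero set is $S_F\cap S_G$, and to observe that
\[
h(x)=\min_{(y,z)\in\mathbb S^{p_1-1}\times\mathbb S^{p_2-1}}\Big(\big\|\textstyle\sum_i y_iF_i(x)\big\|^2+\big\|\textstyle\sum_j z_jG_j(x)\big\|^2\Big),
\]
i.e.\ $h$ is the marginal minimum, over a compact set in $\mathbb R^{p_1+p_2}$, of a single polynomial in $n+p_1+p_2$ variables of degree at most $2d+2$. (Note the constraint is the \emph{product} of spheres, not $\mathbb S^{p_1+p_2-1}$; over the latter you would recover $\min\{f^2,g^2\}$, which is your block-matrix dead end.) One then reruns Claims~\ref{Claim3}--\ref{Claim6} verbatim for this marginal function: the Lagrange step (Claim~\ref{Claim3}(iii)) now gives $\nabla_y a=2f^2(x)y$ and $\nabla_z b=2g^2(x)z$ separately, and the argument of Claim~\ref{Claim6} yields
\[
\frak m_h(x)\ \ge\ c\,|h(x)|^{1-\frac{1}{\mathscr R(n+p_1+p_2,2d+2)}}
\]
near points of $S_F\cap S_G$. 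Lemma~\ref{Lemma22} and the covering argument of Proposition~\ref{CompactErrorBound} then give $c\,\mathrm{dist}(x,S_F\cap S_G)\le h(x)^{1/\mathscr R}$, and since $f^2+g^2\le(f+g)^2$ this is bounded by $(f+g)^{2/\mathscr R}$, after which your Lipschitz step finishes. The exponent $2$ in $2/\mathscr R$ thus comes from the algebraic inequality $f^2+g^2\le(f+g)^2$, not from a slope estimate for $f+g$ itself.
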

\begin{proof} Let
$$a(x,y):=\left\|\sum_{i=1}^{p_1}y_iF_i(x)\right\|^2 \ \ \text{ and }\ \ b(x,z):=\left\|\sum_{j=1}^{p_2}z_jG_j(x)\right\|^2$$
where $F_i$ and $G_j$ are, respectively, the $i^\text{th}$ row of $F$ and the $j^\text{th}$ row of $G$.
By Theorem~\ref{NonSmoothTheorem}, we have
$$\tilde f(x):=f^2(x)=\min_{y\in\Bbb R^{p_1},\|y\|=1}a(x,y) \ \ \text{ and }\ \ \tilde g(x):=g^2(x)=\min_{z\in\Bbb R^{p_2},\|z\|=1}b(x,z).$$
So
$$S_F\cap S_G=\{x \in {\Bbb R}^n \ : \ f(x) = g(x) = 0\}=\{x \in {\Bbb R}^n \ : \ f(x) + g(x) = 0\}.$$
Set $E_1(x):=\{y\in\Bbb S^{p_1-1}:\tilde f(x)=a(x,y)\}$ and $E_2(x):=\{z\in\Bbb S^{p_2-1}:\tilde g(x)=b(x,z)\}$. Let $h(x):=\tilde f(x)+\tilde g(x)$, then $S_F\cap S_G=\{x \in {\Bbb R}^n \ : \ h(x) = 0\}$. Let $\bar x\in S_F\cap S_G$, following up the steps of the proof of Theorem~\ref{NonSmoothTheorem}, the reader may check the followings:
\begin{enumerate}
\item[(a)] For all $x\in\Bbb R^n,\ y\in E_1(x),\ z\in E_2(x)$,
\begin{enumerate}
\item[(a1)] $h(x)=a(x,y)+b(x,z),$
\item[(a2)] $\hat \partial h(x) \subset \{\nabla_x(a(x,y)+b(x,z))\}$, so %$\partial h(x) \subset \cup_{(y', z') \in E_1(x) \times E_2(x)} \{\nabla_x(a(x, y') + b(x, z'))\}$ and
$$\frak m_h(x) \ge \inf_{(y', z') \in E_1(x) \times E_2(x)} \|\nabla_x(a(x, y') + b(x, z'))\|,$$

\item[(a3)] $\nabla_ya(x,y)=2\tilde f(x)y$ and $\nabla_zb(x,z)=2\tilde g(x)z$.
\end{enumerate}
 \item[(b)] There exist some positive constants $c$ and $\epsilon'$ such that
 $$\|\nabla_x(a(x,y)+b(x,z))\|\ge c\|a(x,y)+b(x,z)\|^{1-\frac{1}{\mathscr R(n+p_1+p_2,2d+2)}}$$
 for all $x\in\Bbb B^n(\bar x,\epsilon')$ and all $(y,z)\in\Bbb R^{p_1}\times\Bbb R^{p_2}$ with $\mathrm{dist}((y,z),E_1(\bar x)\times E_2(\bar x))<\epsilon'$.
 \item[(c)] For each $\epsilon'>0$, there exists a positive constant $\epsilon<\epsilon'$ such that for all $x\in \Bbb B^n(\bar x,\epsilon)$ and all $(y,z)\in E_1(x)\times E_2(x)$, we have
 $$\mathrm{dist}((y,z),E_1(\bar x)\times E_2(\bar x))<\epsilon'.$$
 \item[(d)] There exist some positive constants $c$ and $\epsilon$ such that
 $$\frak m_h(x)\ge c|h(x)|^{1-\frac{1}{\mathscr R(n+p_1+p_2,2d+2)}}$$
 for all $x\in\Bbb B^n(\bar x,\epsilon).$
\end{enumerate}
By the same proof as Proposition~\ref{CompactErrorBound}, there exists a constant $c>0$ such that for all $x\in K$, we have
$$c\, \mathrm{dist}(x, \{h=0\}) \le \Big(h(x)\Big)^{\frac{1}{\mathscr R(n+p_1+p_2, 2d+2)}}.$$
Therefore
\begin{eqnarray}\label{Eq8}
c\, \mathrm{dist}(x, S_F\cap S_G) \le \Big(f^2(x)+g^2(x)\Big)^{\frac{1}{\mathscr R(n+p_1+p_2, 2d+2)}}\le \Big(f(x)+g(x)\Big)^{\frac{2}{\mathscr R(n+p_1+p_2, 2d+2)}}.
\end{eqnarray}
Since $K$ is compact, $M := \max_{x\in K}\{\textrm{dist}(x,S_F),\textrm{dist}(x,S_G)\}<+\infty$ and $\widetilde{K} := K + M \overline{\mathbb{B}}^n$ is a compact set. Note that the functions $x \mapsto f(x)$ and $x \mapsto g(x)$ are locally Lipschitz, so are globally Lipschitz on the compact set $\widetilde{K}$. Thus there exists a constant $L>0$ such that for all $x,x'\in \widetilde{K}$, we have
$$|f(x)-f(x')|\le L\|x-x'\| \quad \textrm{ and } \quad |g(x)-g(x')|\le L\|x-x'\|.$$
Now for each $x\in K$, there exist $x'\in S_F$ and $x''\in S_G$ such that
$$\textrm{dist}(x,S_F) = \|x-x'\| \quad \textrm{ and } \quad \textrm{dist}(x, S_G) = \|x-x''\|.$$
It is clear that $x',x'' \in \widetilde{K}.$ Hence
\begin{eqnarray*}
|f(x)| &=&  |f(x)-f(x')| \le L\|x-x'\|  = L \, \mathrm{dist}(x,S_F), \\
|g(x)| &=&  |g(x)-g(x'')|\le L\|x-x''\| = L\, \mathrm{dist}(x,S_G).
\end{eqnarray*}
These inequalities, together with Inequality~(\ref{Eq8}), imply the corollary.
\end{proof}

%\begin{remark}{\rm
%In real algebraic geometry, Corollary \ref{Separation} is referred to as {\em separation} of semi-algebraic sets and go back to \L ojasiewicz \cite{Lojasiewicz1965}.
%}\end{remark}

The next result establishes a sharpen version of \L ojasiewicz's factorization lemma for smallest singular value functions.
\begin{corollary}\label{factorization}
Let $F \colon  \Bbb R^n \rightarrow \mathscr M(p_1,q_1),\ x\mapsto F(x)=(f_{ij}(x)),$ $G \colon \Bbb R^n \rightarrow \mathscr M(p_2,q_2),\ x\mapsto G(x)=(g_{kl}(x)),$ and $H \colon \Bbb R^n \rightarrow \mathscr M(p_3,q_3),\ x\mapsto H(x)=(h_{st}(x)),$ be some polynomial matrices with $p_1\le q_1,\ p_2\le q_2$ and $p_3\le q_3$. Let $f(x), g(x),$ and $h(x)$ be the corresponding smallest singular value functions of $F(x), G(x),$ and $H(x)$. Assume that $K := \{x \in {\Bbb R}^n \ : \ h(x) =  0\}$ is a compact set and that
\begin{eqnarray*}
\{x \in K \ : \ f(x) = 0 \} & \subset & \{x \in K \ : \  g(x) = 0\}.
\end{eqnarray*}
Then there is a constant $c > 0$ such that
$$g(x) \le c\,\Big(f(x)\Big)^{\frac{2}{\mathscr R(n + p_1 + p_3 , 2d + 2)}}, \quad \textrm{for all } \quad x\in K,$$
where $\displaystyle d := \max_{i = 1, \ldots, p_1,\ j = 1, \ldots, q_1,\ s = 1, \ldots, p_3,\ t = 1, \ldots, q_3}\{\deg f_{i j},\deg h_{s t}\}.$
\end{corollary}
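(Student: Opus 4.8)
The plan is to reduce Corollary~\ref{factorization} to Corollary~\ref{Separation} together with a compactness argument, in the same spirit as the classical derivation of \L ojasiewicz's factorization lemma from the separation of semialgebraic sets. First I would observe that, since $h$ is a smallest singular value function and $K=\{h=0\}$ is compact, the set $S_F\cap K=\{x\in K : f(x)=0\}$ is a nonempty (otherwise there is nothing to prove, or one handles the empty case separately) compact set contained in $S_G=\{g=0\}$. The idea is to bound $g(x)$ on $K$ by a power of $\mathrm{dist}(x, S_F\cap K)$, and then bound that distance by a power of $f(x)$.

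For the first bound I would use the local Lipschitz continuity of $g$ (Claim~\ref{Claim1}): since $g$ vanishes on $S_F\cap K\supset$ — more precisely since $g$ vanishes on $S_G\supset S_F\cap K$ — and $g$ is Lipschitz on the compact set $K$ with some constant $L$, for each $x\in K$ picking a nearest point $x'\in S_F\cap K$ (wait — one needs $x'\in S_G$, and $S_F\cap K\subset S_G$, so that is fine) gives $g(x)=|g(x)-g(x')|\le L\,\|x-x'\|=L\,\mathrm{dist}(x,S_F\cap K)$. For the second bound I would apply Corollary~\ref{Separation} to the pair of matrices $F$ and $H$: their common zero set is $S_F\cap S_H=S_F\cap K$, which is nonempty by hypothesis, so for the compact set $K$ there is a constant $c>0$ with
$$c\,\mathrm{dist}(x,S_F\cap K)\le\Big(\mathrm{dist}(x,S_F)+\mathrm{dist}(x,K)\Big)^{\frac{2}{\mathscr R(n+p_1+p_3,2d+2)}}\quad\text{for all }x\in K,$$
where $d=\max\{\deg f_{ij},\deg h_{st}\}$ as in the statement. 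For $x\in K$ we have $\mathrm{dist}(x,K)=0$, so this simplifies to $c\,\mathrm{dist}(x,S_F\cap K)\le\mathrm{dist}(x,S_F)^{\frac{2}{\mathscr R(n+p_1+p_3,2d+2)}}$.

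It remains to bound $\mathrm{dist}(x,S_F)$ by a constant multiple of $f(x)$ on $K$. This is exactly Proposition~\ref{CompactErrorBound} applied to $F$ on the compact set $K$: there is $c'>0$ with $c'\,\mathrm{dist}(x,S_F)\le\big(f(x)\big)^{\frac{2}{\mathscr R(n+p_1,2d+2)}}$ for all $x\in K$. However, to keep the exponent uniform and avoid chaining two different powers, I would instead directly observe that $\mathrm{dist}(x,S_F)$ is bounded on the compact set $K$, say by a constant $R$, so that $\mathrm{dist}(x,S_F)\le R^{1-\theta}\,\mathrm{dist}(x,S_F)^{\theta}$ whenever $\theta\le 1$ and $\mathrm{dist}(x,S_F)\le R$ — but cleaner still: combine Proposition~\ref{CompactErrorBound} (which gives $\mathrm{dist}(x,S_F)\le C f(x)^{2/\mathscr R(n+p_1,2d+2)}$) with boundedness to raise to the appropriate power, then feed into the separation estimate. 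Putting the pieces together: $g(x)\le L\,\mathrm{dist}(x,S_F\cap K)\le (L/c)\,\mathrm{dist}(x,S_F)^{\frac{2}{\mathscr R(n+p_1+p_3,2d+2)}}$, and using that $\mathrm{dist}(x,S_F)\le C\,f(x)^{2/\mathscr R(n+p_1,2d+2)}\le C'\,f(x)^{2/\mathscr R(n+p_1+p_3,2d+2)}$ on $K$ (the last step because the exponent on the right is no larger — $\mathscr R$ is increasing in its first argument — and $f$ is bounded on $K$), one obtains $g(x)\le c\,f(x)^{(2/\mathscr R(n+p_1+p_3,2d+2))^2}$; to get the clean exponent $\frac{2}{\mathscr R(n+p_1+p_3,2d+2)}$ as stated, I would instead bound $\mathrm{dist}(x,S_F)$ linearly by $f(x)$ using local Lipschitzness of $f$ and the same nearest-point trick as for $g$, so that $\mathrm{dist}(x,S_F)\le L'f(x)$ on $K$, giving directly $g(x)\le (L/c)(L')^{\cdots}\,f(x)^{\frac{2}{\mathscr R(n+p_1+p_3,2d+2)}}$ after absorbing the bounded factor $f(x)^{1-2/\mathscr R}$. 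The main obstacle is purely bookkeeping: choosing which of the two available estimates (Lipschitz linear bound versus Proposition~\ref{CompactErrorBound}) to apply to $\mathrm{dist}(x,S_F)$ so that the final exponent comes out exactly as $\frac{2}{\mathscr R(n+p_1+p_3,2d+2)}$ and not a worse composite power; the Lipschitz linear bound $\mathrm{dist}(x,S_F)\le L'\,f(x)$, valid on $K$ because $f$ vanishes on $S_F$ and is Lipschitz on a suitable compact neighborhood of $K$, is the right choice, and then one must only check that $f(x)^{1-2/\mathscr R(n+p_1+p_3,2d+2)}$ is bounded on $K$, which holds since $f$ is continuous and $K$ is compact.
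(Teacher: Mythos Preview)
Your overall plan is the same as the paper's: bound $g(x)$ by $L\,\mathrm{dist}(x,\mathcal A)$ with $\mathcal A=\{x\in K:f(x)=0\}$ via Lipschitz continuity of $g$, and bound $\mathrm{dist}(x,\mathcal A)$ by a power of $f(x)$ via the separation machinery applied to the pair $(F,H)$. The difficulty is entirely in the second step, and there your argument contains a real error.

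You invoke the \emph{final} statement of Corollary~\ref{Separation}, which on $K$ gives
\[
c\,\mathrm{dist}(x,\mathcal A)\le \mathrm{dist}(x,S_F)^{\frac{2}{\mathscr R(n+p_1+p_3,2d+2)}},
\]
and then you need $\mathrm{dist}(x,S_F)\le L'\,f(x)$. You claim this follows ``using local Lipschitzness of $f$ and the same nearest-point trick as for $g$'', but the nearest-point trick runs the other way: if $x'\in S_F$ is nearest to $x$, Lipschitzness gives $f(x)=|f(x)-f(x')|\le L'\|x-x'\|=L'\,\mathrm{dist}(x,S_F)$, which is the \emph{reverse} inequality. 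A linear error bound $\mathrm{dist}(x,S_F)\le L'\,f(x)$ is false in general (take $n=p=q=1$, $F(x)=x^2$). The only available bound in that direction is Proposition~\ref{CompactErrorBound}, which yields a fractional power and hence a strictly worse composite exponent, as you yourself noticed.

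The fix---and this is exactly what the paper does---is to go one step back inside the proof of Corollary~\ref{Separation} and use the intermediate inequality~(\ref{Eq8}):
\[
c\,\mathrm{dist}(x,\mathcal A)\le\bigl(f(x)+h(x)\bigr)^{\frac{2}{\mathscr R(n+p_1+p_3,2d+2)}}.
\]
Since $h(x)=0$ on $K$, this gives $\mathrm{dist}(x,\mathcal A)\le c_0\,f(x)^{2/\mathscr R(n+p_1+p_3,2d+2)}$ directly, with no detour through $\mathrm{dist}(x,S_F)$, and combining with $g(x)\le L\,\mathrm{dist}(x,\mathcal A)$ finishes the proof with the clean exponent.
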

\begin{proof} The problem is trivial if $\{x \in K \ : \ f(x) = 0 \}=\emptyset$ so assume the contrary. Let
$${\mathcal A} := \{x \in K \ : \ f(x) = 0\} = \{x \in \mathbb{R}^n \ : f(x) = h(x) = 0\}.$$
Similar to Inequality~(\ref{Eq8}) in the proof of Corollary~\ref{Separation}, we have
\begin{eqnarray*}
\textrm{dist}(x, {\mathcal A}) & \le & c_0\Big(f(x)+ h(x)\Big)^{\frac{2}{\mathscr R(n + p_1 + p_3 , 2d + 2)}} \  = \ c_0\Big(f(x)\Big)^{\frac{2}{\mathscr R(n + p_1 + p_3 , 2d + 2)}},
\end{eqnarray*}
for all $x \in K,$ where $c_0$ is a positive constant. Let $M:=\max_{x\in K}\textrm{dist}(x,\{g=0\})<+\infty$ and $\widetilde K := K + M \overline{\mathbb{B}}^n.$ The function $g$ is locally Lipschitz, thus, is globally Lipschitz on $\widetilde K$, i.e., there is a constant $L > 0$ such that $|g(x) - g(y)| \le L \|x - y\|$ for all $x, y \in \widetilde K.$

Now take any $x \in K.$ Clearly, there exists a point $y \in \widetilde K$ such that $g(y) = 0$ and $\textrm{dist}(x, \{g = 0\}) = \|x - y\|.$ Therefore,
\begin{eqnarray*}
g(x) & = & |g(x) - g(y)| \ \le \ L\|x - y\| \ = \  L\, \mathrm{dist}\big(x, \{g = 0\} \big) \\
&\le& L \, \mathrm{dist}\big(x, {\mathcal A} \big) \ \le \ L c_0\Big(f(x)\Big)^{\frac{2}{\mathscr R(n + p_1 + p_3 , 2d + 2)}}.
\end{eqnarray*}
This completes the proof of the corollary.
\end{proof}
\begin{remark}{\rm The statement of Corollary~\ref{factorization} still holds in the case $g \colon K \rightarrow \mathbb{R}$ is a locally Lipschitz function.}
\end{remark}

%------------------------------------------------------------------------------------------------------------

\section{Global \L ojasiewicz inequality and global separation of semialgebraic sets} \label{GlobalLojasiewicz}

In this section, we provide a global separation of semialgebraic sets and a global \L ojasiewicz inequality with explicit exponents for the case of smallest singular value functions.

\begin{corollary}\label{GlobalSeparation}
Let $F \colon \Bbb R^n \rightarrow \mathscr M(p_1,q_1),\ x\mapsto (f_{ij}(x))$ and $G \colon \Bbb R^n \rightarrow \mathscr M(p_2,q_2),\ x \mapsto (g_{kl}(x))$ be two polynomial matrices with $p_1\le q_1$ and $p_2\le q_2$. Let $f$ and $g$ be the corresponding smallest singular value functions.  Set
$$S_F := \{x \in {\Bbb R}^n \ : \ f(x) = 0\} \quad \textrm{ and } \quad   S_G := \{x \in {\Bbb R}^n \ : \ g(x) = 0\}$$
and assume that $S_F\cap S_G \ne\emptyset.$ Then there exists a constant $c>0$ such that
\begin{eqnarray*}
c \left( \frac{{\rm dist}(x,S_F\cap S_G)}{1  + \|x\|^2} \right)^{\frac{\mathscr R(n + p_1 + p_2 , 2d + 2)}{2}}  & \le &
\mathrm{dist}(x,S_F) + \mathrm{dist}(x,S_G) \quad \textrm{ for all } \quad x \in \mathbb{R}^n,
\end{eqnarray*}
where $\displaystyle d := \max_{i = 1, \ldots, p_1,\ j = 1, \ldots,\ q_1,\ k = 1, \ldots, p_2,\ l = 1, \ldots, q_2}\{\deg f_{i j},\deg g_{k l}\}.$
\end{corollary}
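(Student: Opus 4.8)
The plan is to deduce the global inequality from the local one (Corollary~\ref{Separation}) by the standard \L ojasiewicz compactification trick, passing to the one-point compactification of $\mathbb{R}^n$ via the substitution that sends $x$ to $\frac{x}{1+\|x\|^2}$ (or, equivalently, working on the sphere $\mathbb{S}^n$ via stereographic projection). First I would introduce the map $\sigma \colon \mathbb{R}^n \to \mathbb{S}^n \subset \mathbb{R}^{n+1}$, the inverse of stereographic projection from the north pole $N$, which is a semialgebraic diffeomorphism onto $\mathbb{S}^n \setminus \{N\}$ whose inverse is Lipschitz on $\mathbb{S}^n \setminus \{N\}$ but whose Jacobian behaves like $\frac{1}{1+\|x\|^2}$. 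The images $\sigma(S_F)$ and $\sigma(S_G)$ are semialgebraic subsets of $\mathbb{S}^n$; we take their closures $\widehat{S_F} := \overline{\sigma(S_F)}$ and $\widehat{S_G} := \overline{\sigma(S_G)}$, which are compact semialgebraic sets, possibly adding the point $N$. Since $S_F \cap S_G \neq \emptyset$ and $\sigma$ is injective with closed-graph-type behaviour, one checks $\widehat{S_F} \cap \widehat{S_G} \supseteq \overline{\sigma(S_F \cap S_G)} \neq \emptyset$.

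Next I would apply the local separation result \emph{at the added point} $N$ together with compactness. Strictly speaking, Corollary~\ref{Separation} is stated for smallest singular value functions of polynomial matrices on $\mathbb{R}^n$, but its conclusion is really a \L ojasiewicz-type estimate on a compact set; the genuinely usable input here is the abstract \L ojasiewicz separation inequality for closed semialgebraic sets on a compact set (which underlies Corollary~\ref{Separation}). So I would invoke: for the compact semialgebraic sets $\widehat{S_F}, \widehat{S_G} \subset \mathbb{S}^n$ with nonempty intersection, there exist $c_1 > 0$ and an exponent $\beta$ such that $c_1\,\mathrm{dist}_{\mathbb{S}^n}(u, \widehat{S_F} \cap \widehat{S_G})^{\beta} \le \mathrm{dist}_{\mathbb{S}^n}(u, \widehat{S_F}) + \mathrm{dist}_{\mathbb{S}^n}(u, \widehat{S_G})$ for all $u \in \mathbb{S}^n$, and — this is the point requiring the singular-value structure — the exponent can be taken to be $\beta = \frac{2}{\mathscr{R}(n + p_1 + p_2, 2d+2)}$ by running the same argument as in Corollary~\ref{Separation} on the compact manifold $\mathbb{S}^n$ (equivalently, a cover of $\mathbb{S}^n$ by finitely many charts, in each of which the functions $f, g$ pull back to smallest singular value functions of polynomial matrices of controlled degree, plus the local estimate (\ref{NonSmoothLojasiewiczBetter})). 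Taking $u = \sigma(x)$ and pulling back gives the result after comparing distances in $\mathbb{S}^n$ with Euclidean distances in $\mathbb{R}^n$.

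The metric comparison is the third step. For $x, x' \in \mathbb{R}^n$ one has the standard estimates $\|\sigma(x) - \sigma(x')\| \le \frac{2\|x - x'\|}{\sqrt{(1+\|x\|^2)(1+\|x'\|^2)}}$ and, conversely, $\|\sigma(x) - \sigma(x')\| \ge c_2 \frac{\|x - x'\|}{(1+\|x\|^2)(1+\|x'\|^2)}$ on bounded regions; and $\mathrm{dist}_{\mathbb{S}^n}$ is bi-Lipschitz equivalent to chordal distance $\|\cdot\|$ on $\mathbb{S}^n$. Feeding these in: $\mathrm{dist}_{\mathbb{S}^n}(\sigma(x), \widehat{S_F}) \le c_3 \frac{\mathrm{dist}(x, S_F)}{1 + \|x\|^2}$ (the numerator distance in $\mathbb{R}^n$, using that points of $S_F$ realizing the distance, if they escape to infinity, only make the chordal distance smaller — so one is really bounding $\inf$ over $\sigma(S_F)$ which is $\le$ the value at any single point of $S_F$), while $\mathrm{dist}_{\mathbb{S}^n}(\sigma(x), \widehat{S_F}\cap\widehat{S_G}) \ge c_4 \frac{\mathrm{dist}(x, S_F \cap S_G)}{1 + \|x\|^2}$ (here one needs the lower bound and the fact that $\widehat{S_F}\cap\widehat{S_G}$, after possibly discarding $N$, consists of $\sigma$-images of points of $S_F \cap S_G$, again using that including $N$ cannot decrease the relevant distance beyond the stated power of $1 + \|x\|^2$). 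Substituting into the compact inequality, absorbing the various $1 + \|x\|^2$ factors (the worst one being $\left(\frac{1}{1+\|x\|^2}\right)^{\beta}$ on the left against $\frac{1}{1+\|x\|^2}$ on the right, which is fine since $\beta \le 1 \le \frac{\mathscr{R}}{2}$ so raising to the power $\frac{\mathscr{R}(n+p_1+p_2,2d+2)}{2} = \frac{1}{\beta}$ cleans it up), and renaming constants, yields exactly the asserted estimate.

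The main obstacle is bookkeeping at the point at infinity $N$: one must verify that the \L ojasiewicz separation exponent for the \emph{compactified} sets $\widehat{S_F}, \widehat{S_G}$ on $\mathbb{S}^n$ is still controlled by $\mathscr{R}(n + p_1 + p_2, 2d + 2)$ and not by something worse coming from the (non-polynomial) behaviour of $f, g$ near infinity. The clean way around this is \emph{not} to compactify the sets abstractly but to compactify the \emph{functions}: replace $F(x)$ by the matrix whose entries are $(1+\|x\|^2)^{\lceil d/2 \rceil} f_{ij}\!\left(\frac{x}{1+\|x\|^2}\right)$-type homogenizations — more precisely, use the substitution $x \mapsto \frac{y}{1 - y_{n+1}}$ on the chart and clear denominators so that the pulled-back $f$, $g$ become (up to a positive analytic factor like $(1+\|x\|^2)^{-d}$) smallest singular value functions of genuine polynomial matrices on $\mathbb{R}^{n}$ of degree at most $2d$ — then the local Corollary~\ref{Separation} applies verbatim at every point of a finite atlas of $\mathbb{S}^n$ with the correct exponent, and the scaling factor is harmless because it is bounded above and below on each compact chart. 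Once this reduction is set up carefully, everything else is the routine metric estimates of the previous paragraph; so the real content of the proof is choosing the right homogenizing substitution and checking the degree count $2d + 2$ survives it.
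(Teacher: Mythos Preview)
Your approach is essentially the same as the paper's: the paper simply states that one should follow the proof of \cite[Theorem~2]{Kurdyka2014} (Kurdyka--Spodzieja's global separation theorem), replacing their local separation input \cite[Corollary~8]{Kurdyka2014} by Corollary~\ref{Separation}, and notes that the rest of that argument works for semialgebraic sets. That argument is precisely the compactification/inversion trick you outline, and the degree bookkeeping at the point at infinity that you flag as the main obstacle is exactly the place where the explicit exponent from Corollary~\ref{Separation} is needed.
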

\begin{proof}
The proof follows the same lines of that of~\cite[Theorem 2]{Kurdyka2014}, by using Corollary~\ref{Separation} instead of~\cite[Corollary~8]{Kurdyka2014}. Note that the arguments of the proof of \cite[Theorem 2]{Kurdyka2014} also hold for semialgebraic sets, the assumption of algebraicity is only needed for the application of \cite[Corollary 8]{Kurdyka2014}.
\end{proof}

\begin{remark} {\rm 
Corollary~\ref{GlobalSeparation} can be also obtained by applying \cite[Theorem 1.1]{Kurdyka2016} but the exponent will be different.
}\end{remark}

Next we state a global \L ojasiewicz inequality for smallest singular value functions (compare~\cite[Theorem7]{Solerno1991}):

\begin{corollary}\label{GlobalKollar}
Let $F \colon  \Bbb R^n \rightarrow \mathscr M(p,q),\ x\mapsto F(x)=(f_{ij}(x)),$ be a polynomial matrix with $p\le q$, and $f$ be the corresponding smallest singular value function. Assume that $S_F := \{x \in {\Bbb R}^n \ : \ f(x) =  0\} \ne \emptyset.$
Then for some constant $c > 0,$
\begin{eqnarray*}
c\, \left ( \frac{ \mathrm{dist}(x, S_F)}{1 + \|x\|^2} \right)^{\frac{\mathscr R(n + p + 2 , 4d + 2)}{4}} & \le & f(x) \quad \textrm{ for all } \quad x \in \mathbb{R}^n,
\end{eqnarray*}
where $\displaystyle d := \max_{i = 1, \ldots, p,\ j = 1, \ldots, q} \deg f_{i j}.$
\end{corollary}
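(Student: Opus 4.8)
The plan is to reduce the global statement to the local one (Corollary~\ref{Separation}) combined with the standard trick of compactifying $\mathbb R^n$ and controlling the behaviour at infinity by a single extra pair of variables, exactly as in the passage from a local \L ojasiewicz inequality to the global Koll\'ar--Solern\'o type inequality. First I would introduce an auxiliary polynomial matrix whose zero set is bounded and contains enough information to play the role of the ``section at infinity.'' Concretely, on $\mathbb R^{n}\times\mathbb R\times\mathbb R$ with coordinates $(x,t,s)$, consider the $(2\times 2)$ (hence $p=2\le q=2$) polynomial matrix
\begin{equation*}
H(x,t,s):=\begin{pmatrix} t\|x\|^2+ s^2-1 & 0\\ 0 & t\|x\|^2+s^2-1\end{pmatrix},
\end{equation*}
or a similar matrix, engineered so that $H$ has entries of degree $4d+2$ after the rescaling below and so that its smallest singular value $h$ vanishes exactly on a compact set $K:=\{h=0\}$ in $\mathbb R^{n+2}$; the key feature is that on $K$ the variable $t$ is comparable to $(1+\|x\|^2)^{-1}$. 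One must also lift $F$: replace $F(x)$ by a homogenized matrix $\widetilde F(x,t,s)$ obtained by multiplying $f_{ij}(x)$ through by an appropriate power of a homogenizing linear form so that $\widetilde F$ has degree $2d+1$ (hence $2\deg = 4d+2$ after squaring), and check that $f(x)$ and the smallest singular value $\widetilde f$ of $\widetilde F$ are related on $K$ by $\widetilde f(x,t,s)^2 = t^{2d+1}\!\cdot(\text{unit})\cdot f(x)^2$, up to harmless bounded factors.

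Next I would apply Corollary~\ref{Separation} to the two polynomial matrices $\widetilde F$ and $H$ on the compact set $K$ itself (or on a slightly larger compact neighbourhood): since $\{x\in K:\widetilde f=0\}\subseteq\{h=0\}=K$ trivially and $S_{\widetilde F}\cap S_H\ne\emptyset$ follows from $S_F\ne\emptyset$, Corollary~\ref{Separation} (in the sharpened factorization form of Corollary~\ref{factorization}, which is the cleaner tool here) gives a constant $c>0$ with
\begin{equation*}
\mathrm{dist}\big((x,t,s),\,S_{\widetilde F}\cap S_H\big)\ \le\ c\,\big(\widetilde f(x,t,s)\big)^{\frac{2}{\mathscr R(n+2+2,\,4d+2)}}\qquad\text{on }K.
\end{equation*}
Here the number of rows is $2+2=4$ and the degree is $4d+2$, which is precisely why the exponent in the statement involves $\mathscr R(n+p+2,4d+2)$ with $p=2$ in that slot — one should double-check that the bookkeeping of row counts ($p_1+p_3$ in Corollary~\ref{factorization}) and the factor of $4$ versus $2$ in the denominator matches after one takes into account the square $\widetilde f = (\text{sing.\ val.})^2$; the ``$/4$'' in the final exponent is the ``$/2$'' of Corollary~\ref{Separation} composed with another ``$/2$'' coming from comparing $\widetilde f$ to $\widetilde f^{1/2}$.

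Finally I would dehomogenize: substitute the explicit point $(x, t_0, s_0)\in K$ with $t_0\asymp (1+\|x\|^2)^{-1}$, use the relation $\widetilde f(x,t_0,s_0)\asymp t_0^{d+\frac12} f(x)\asymp (1+\|x\|^2)^{-(d+1/2)}f(x)$, and bound $\mathrm{dist}(x,S_F)$ from above by $\mathrm{dist}((x,t_0,s_0),S_{\widetilde F}\cap S_H)$ times a factor polynomial in $\|x\|$ (the projection $\mathbb R^{n+2}\to\mathbb R^n$ restricted to $K$ is Lipschitz, and a point of $S_{\widetilde F}\cap S_H$ projects into $S_F$). Collecting the powers of $(1+\|x\|^2)$ on one side yields exactly
\begin{equation*}
c\,\Big(\tfrac{\mathrm{dist}(x,S_F)}{1+\|x\|^2}\Big)^{\frac{\mathscr R(n+p+2,\,4d+2)}{4}}\ \le\ f(x),\qquad x\in\mathbb R^n,
\end{equation*}
after absorbing all bounded factors into $c$. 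I expect the main obstacle to be the bookkeeping in this last step: making the homogenization of $F$ and the choice of $H$ consistent so that (i) the combined zero set is genuinely compact, (ii) $t$ is two-sidedly comparable to $(1+\|x\|^2)^{-1}$ on it, and (iii) the exponents from Corollary~\ref{Separation}/Corollary~\ref{factorization} reassemble into $\mathscr R(n+p+2,4d+2)/4$ with no off-by-one error in the number of variables or rows. The structure of the argument is the classical one (compare~\cite[Theorem 7]{Solerno1991} and the proof of Corollary~\ref{GlobalSeparation} via~\cite{Kurdyka2014}); the only new input is feeding the singular-value inequality of Corollary~\ref{Separation} into that machine, so once the normalization is fixed the estimates are routine.
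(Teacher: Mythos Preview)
Your approach is genuinely different from the paper's, and as sketched it has real gaps that are not just bookkeeping.

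\textbf{First gap: the auxiliary set is not compact.} With your choice $H(x,t,s)=\mathrm{diag}(t\|x\|^2+s^2-1,\,t\|x\|^2+s^2-1)$, the zero set $K=\{t\|x\|^2+s^2=1\}$ is unbounded in $\mathbb R^{n+2}$: for every $x$ the point $(x,0,1)$ lies in $K$. So you cannot apply Corollary~\ref{Separation} or Corollary~\ref{factorization} on $K$, and ``engineer a similar matrix'' is not an innocent adjustment --- any polynomial constraint that forces $t\asymp(1+\|x\|^2)^{-1}$ on its zero set while keeping that zero set compact requires an additional relation, which will change both the degree and the row count.

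\textbf{Second gap: the exponent count does not close.} You write that ``the number of rows is $2+2=4$'' and that this explains $\mathscr R(n+p+2,4d+2)$ ``with $p=2$ in that slot''. But $p$ in the statement is the number of rows of the original $F$, not $2$; your homogenized $\widetilde F$ must still carry $p$ rows. Feeding $p_1=p$, $p_3=2$, ambient dimension $n+2$, and entry-degree $2d+1$ into Corollary~\ref{factorization} yields $\mathscr R((n+2)+p+2,\,2(2d+1)+2)=\mathscr R(n+p+4,\,4d+4)$, not $\mathscr R(n+p+2,4d+2)$. The discrepancy is not an off-by-one: your scheme adds two variables and two rows where the paper adds one of each, and it raises the degree by one too many.

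\textbf{What the paper does instead.} The proof avoids compactification entirely. It passes to $\mathbb R^{n+1}$ with one extra real variable $y$ and sets
\[
\widetilde F(x,y):=F(x)F^T(x)-yI_p\in\mathscr M(p,p),\qquad \widetilde G(x,y):=(y)\in\mathscr M(1,1).
\]
Then $S_{\widetilde F}=\{(x,y):y\text{ is an eigenvalue of }F(x)F^T(x)\}$, $S_{\widetilde G}=\mathbb R^n\times\{0\}$, and $S_{\widetilde F}\cap S_{\widetilde G}=S_F\times\{0\}$. The entries of $\widetilde F$ have degree $2d$, so applying Corollary~\ref{GlobalSeparation} (not the local Corollary~\ref{Separation}) in $n+1$ variables with $p_1=p$, $p_2=1$ gives the exponent $\mathscr R((n+1)+p+1,\,2(2d)+2)/2=\mathscr R(n+p+2,\,4d+2)/2$. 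Evaluating at $z=(x,0)$ one has $\mathrm{dist}(z,S_{\widetilde G})=0$, $\mathrm{dist}(z,S_{\widetilde F}\cap S_{\widetilde G})=\mathrm{dist}(x,S_F)$, and $\mathrm{dist}(z,S_{\widetilde F})\le f(x)^2$ since $(x,f(x)^2)\in S_{\widetilde F}$. This yields the inequality with right-hand side $f(x)^2$; taking square roots converts the exponent $/2$ into $/4$. The ``globalization'' work has already been done once and for all in Corollary~\ref{GlobalSeparation}, so no second compactification is needed.
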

\begin{proof}
Define the symmetric polynomial matrices $\widetilde{F} \colon  {\Bbb R^n} \times \mathbb{R} \rightarrow \mathscr M(p,p)$ and
$\widetilde{G} \colon  {\Bbb R^n} \times \mathbb{R} \rightarrow \mathscr M(1,1)$ by
\begin{eqnarray*}
\widetilde{F} (x, y) &:=& F(x)F^T(x) - yI_p \quad \textrm{ and } \quad
\widetilde{G} (x, y) \ := (y)\
\end{eqnarray*}
for $x \in \mathbb{R}^n$ and $y \in \mathbb{R},$ where $I_p$ denotes the unit matrix of order $p.$ Denote by $\tilde f$ and $\tilde g$ the corresponding smallest singular value functions of $\widetilde F$ and $\widetilde G$. Let
$$S_{\widetilde{F}} := \{(x, y) \in {\Bbb R}^n \times {\Bbb R}\ : \ \tilde{f}(x, y) = 0\}\ \ \text{ and }\ \ S_{\widetilde{G}} := \{(x, y) \in {\Bbb R}^n \times {\Bbb R} \ : \ \tilde{g}(x, y) = 0\}.$$
Clearly,
$$S_{\widetilde{F}}=\{(x, y) \in {\Bbb R}^n \times {\Bbb R} \ : \ y \text{ is an eigenvalue of } F(x)F^T(x)\} \ \text{ and }\ \ S_{\widetilde{G}} = \mathbb{R}^n \times \{0\},$$
so $S_{\widetilde{F}} \cap S_{\widetilde{G}}=S_F\times\{0\}.$ By Corollary~\ref{GlobalSeparation}, there exists a constant $c > 0$ such that
\begin{eqnarray*}
c\, \left(\frac{\mathrm{dist}(z, S_{\widetilde{F}} \cap S_{\widetilde{G}})}{1 + \|z\|^2} \right)^{\frac{\mathscr R(n + p + 2 , 4d + 2)}{2}} &\le& \mathrm{dist}(z, S_{\widetilde{F}}) + \mathrm{dist}(z, S_{\widetilde{G}})
\end{eqnarray*}
for all $z := (x, y) \in \mathbb{R}^n \times \mathbb{R}.$ Now it is sufficient to consider $x \in \mathbb{R}^n$ satisfying $f(x) > 0.$ It is clear that $\mathrm{dist}((x,0), S_{\widetilde{G}})=0$. Moreover $\mathrm{dist}((x,0), S_{\widetilde{F}} \cap S_{\widetilde{G}})= \mathrm{dist}(x,S_F)$. Note that $(x, f^2(x)) \in S_{\widetilde{F}}.$ Thus
\begin{eqnarray*}
\mathrm{dist}((x,0), S_{\widetilde{F}}) & \le & \|(x, 0) - (x, f^2(x))\| \ = \ f^2(x).
\end{eqnarray*}
The corollary follows.
\end{proof}

As a direct consequence of Corollary~\ref{GlobalKollar}, we obtain the following result
(see~\cite{Kollar1988, Kurdyka2014}):

\begin{corollary}
Let $F \colon  \Bbb R^n \rightarrow \mathscr M(p,q),\ x\mapsto F(x)=(f_{ij}(x)),$ be a polynomial matrix with $p\le q$, and $f$ be the corresponding smallest singular value function. Assume that $S_F := \{x \in {\Bbb R}^n \ : \ f(x) =  0\}$ is a nonempty compact set.
Then there are some constants $c > 0$ and $R > 0$ such that
\begin{eqnarray*}
c\|x\|^{-\frac{\mathscr R(n + p + 2 , 4d + 2)}{4}} & \le & f(x),  \quad \textrm{ for all } \quad \|x\| \ge R,
\end{eqnarray*}
where $\displaystyle d := \max_{i = 1, \ldots, p,\ j = 1, \ldots, q} \deg f_{i j}.$
\end{corollary}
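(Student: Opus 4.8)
The plan is to deduce this corollary directly from Corollary~\ref{GlobalKollar}, which already provides the global \L ojasiewicz-type estimate
$$c\left(\frac{\mathrm{dist}(x, S_F)}{1 + \|x\|^2}\right)^{\frac{\mathscr R(n+p+2, 4d+2)}{4}} \le f(x) \quad \text{for all } x \in \mathbb{R}^n.$$
The only additional input is the hypothesis that $S_F$ is compact: this is precisely what converts the quantity $\mathrm{dist}(x, S_F)$ into something comparable to $\|x\|$ when $\|x\|$ is large. So the first step is to pick $R_0 > 0$ with $S_F \subset \overline{\mathbb{B}}^n(0, R_0)$, and observe that for $\|x\| \ge 2R_0$ we have $\mathrm{dist}(x, S_F) \ge \|x\| - R_0 \ge \frac{1}{2}\|x\|$. (The exact constants are cosmetic; any lower bound of the form $\mathrm{dist}(x, S_F) \ge c_1 \|x\|$ for $\|x\|$ large enough suffices.)

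Next I would bound the denominator: for $\|x\| \ge 1$ we have $1 + \|x\|^2 \le 2\|x\|^2$. Combining the two estimates, for all $x$ with $\|x\| \ge R := \max\{1, 2R_0\}$,
$$\frac{\mathrm{dist}(x, S_F)}{1 + \|x\|^2} \ge \frac{\frac{1}{2}\|x\|}{2\|x\|^2} = \frac{1}{4\|x\|}.$$
Since $t \mapsto t^{\frac{\mathscr R(n+p+2, 4d+2)}{4}}$ is increasing on $(0, \infty)$ (the exponent is a positive real number), raising both sides to this power and feeding the result into Corollary~\ref{GlobalKollar} yields
$$f(x) \ge c\left(\frac{1}{4\|x\|}\right)^{\frac{\mathscr R(n+p+2, 4d+2)}{4}} = \frac{c}{4^{\mathscr R(n+p+2, 4d+2)/4}}\, \|x\|^{-\frac{\mathscr R(n+p+2, 4d+2)}{4}}$$
for all $\|x\| \ge R$. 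Absorbing the numerical factor into a new constant $c > 0$ gives exactly the claimed inequality.

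There is essentially no obstacle here: the statement is a routine ``behavior at infinity'' corollary of the already-established global inequality, and the compactness of $S_F$ is used only through the elementary fact that the distance to a bounded set grows linearly. The one point requiring a word of care is that one must not try to conclude anything for small $\|x\|$ — on the bounded region $\|x\| \le R$ the ratio $\mathrm{dist}(x, S_F)/(1+\|x\|^2)$ need not be bounded below, and indeed $f$ vanishes on $S_F$, so no bound of the stated form can hold globally; this is why the conclusion is restricted to $\|x\| \ge R$. Everything else is the two elementary estimates above together with monotonicity of the power function.
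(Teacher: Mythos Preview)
Your proof is correct and follows exactly the same approach as the paper: invoke Corollary~\ref{GlobalKollar}, use compactness of $S_F$ to bound $\mathrm{dist}(x,S_F)$ below by a constant times $\|x\|$ for large $\|x\|$, and absorb the resulting numerical factors into the constant. The paper's version is simply a terser statement of the same two estimates you spelled out.
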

\begin{proof}
Indeed, since the set $S_F$ is compact, we can find some positive constants $c_1$ and $c_2$ satisfying the following inequality
$$c_1 \|x\| \le \textrm{dist}(x, S_F) \le c_2 \|x\| \quad \textrm{ for } \quad \|x\| \gg 1.$$
Combining this with Corollary~\ref{GlobalKollar} yields the desired conclusion.
\end{proof}

\section{Global \L ojasiewicz inequality and goodness at infinity} \label{TameLojasiewicz}

In this part, we give a global version of \L ojasiewicz inequality with explicit exponent in the case of tame singular value functions.

\begin{definition}{\rm
We say that the polynomial matrix $F \colon  \Bbb R^n \rightarrow \mathscr M(p,q)$ is {\em good at infinity} if its corresponding singular value function $f$ is tame i.e., there exist some constants $c > 0$ and $R > 0$ such that
$${\frak m}_f(x) \ge c \quad \textrm{ for all } \quad \|x\| \ge R.$$
}\end{definition}

\begin{proposition}\label{HolderTypeTheorem}
Let $F \colon  \Bbb R^n \rightarrow \mathscr M(p,q),\ x\mapsto F(x)=(f_{ij}(x)),$ be a polynomial matrix with $p\le q$, and $f$ be the corresponding smallest singular value function. Assume that $S_F := \{x \in {\Bbb R}^n \ : \ f(x) =  0\} \ne \emptyset$ and that $F$ is good at infinity. Then the set $S_F$ is compact and there exists a constant $c > 0$ such that
$$c\, {\rm dist}(x, S_F) \ \le \ \Big(f(x)\Big)^{\frac{2}{\mathscr R(n+p,2d+2)}} + f(x) \quad \textrm{ for all } \quad x \in \mathbb{R}^n,$$
where $d:=\max_{i = 1, \ldots, p,\ j = 1, \ldots, q}\deg f_{i j}.$
\end{proposition}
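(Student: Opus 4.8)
The plan is to combine the local \L ojasiewicz inequality of Proposition~\ref{CompactErrorBound} with the ``goodness at infinity'' hypothesis, the former controlling $x$ in a large ball and the latter controlling $x$ outside it. First I would show that $S_F$ is compact: it is closed by continuity of $f$, and it is bounded because if there were a sequence $x^k \in S_F$ with $\|x^k\| \to \infty$, then $f(x^k) = 0$ would be a global minimum of $f$, so $0 \in \partial f(x^k)$ and hence ${\frak m}_f(x^k) = 0$ for all large $k$, contradicting the inequality ${\frak m}_f(x) \ge c > 0$ for $\|x\| \ge R$. Fix such constants $c_0 > 0$ and $R > 0$ with ${\frak m}_f(x) \ge c_0$ for $\|x\| \ge R$, and enlarge $R$ so that $S_F \subset \mathbb{B}^n(0, R)$.

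Next I would handle the region $\|x\| \le 2R$ (say) by applying Proposition~\ref{CompactErrorBound} with the compact set $K := \overline{\mathbb{B}}^n(0, 2R)$: there is a constant $c_1 > 0$ with $c_1\, {\rm dist}(x, S_F) \le \big(f(x)\big)^{\frac{2}{\mathscr R(n+p,2d+2)}}$ for all $\|x\| \le 2R$, which already gives the desired estimate on this ball. For the region $\|x\| \ge 2R$ I would use the ``goodness'' bound together with Lemma~\ref{Lemma22}: since $f \ge 0$ and $S_F = \{f \le 0\}$, and since ${\frak m}_f(x) \ge c_0 = c_0 |f(x)|^{0}$ for all $x$ with $\|x\| \ge R$ and $x \notin S_F$ (these are all such $x$, as $S_F$ lies inside $\mathbb{B}^n(0,R)$), a global version of the argument in Lemma~\ref{Lemma22} — applied with exponent $\alpha = 0$ and carried out on the connected unbounded region $\{\|x\| > R\}$, integrating the slope along a path — yields $f(x) \ge c_0\, {\rm dist}\big(x, S_F \cup \overline{\mathbb{B}}^n(0,R)\big)$ for $\|x\|$ large. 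Combining with the bounded-region estimate (which controls ${\rm dist}(x, S_F)$ on the sphere $\|x\| = 2R$ by a multiple of $f$), one gets $c_2\, {\rm dist}(x, S_F) \le f(x)$ for $\|x\| \ge 2R$. Taking $c := \min\{c_1, c_2\}$ and noting that on $\|x\| \le 2R$ the term $\big(f(x)\big)^{\frac{2}{\mathscr R}}$ dominates the needed bound while on $\|x\| \ge 2R$ the term $f(x)$ does, we obtain $c\, {\rm dist}(x, S_F) \le \big(f(x)\big)^{\frac{2}{\mathscr R(n+p,2d+2)}} + f(x)$ everywhere.

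The main obstacle I anticipate is making the ``goodness at infinity'' step rigorous: passing from the pointwise slope bound ${\frak m}_f(x) \ge c_0$ to the linear error bound $f(x) \ge c_2\,{\rm dist}(x, S_F)$ on $\|x\| \ge 2R$ requires a curve-selection or gradient-flow argument for the nonsmooth locally Lipschitz semialgebraic function $f$, matching up the distance to $S_F$ across the sphere $\|x\| = 2R$ with the distance measured from farther out. One clean way is to invoke the semialgebraic version of the mean-value / descent estimate: along a minimizing path from $x$ (with $\|x\|\ge 2R$) toward $S_F$, as long as one stays outside $\overline{\mathbb{B}}^n(0,R)$ the value of $f$ decreases at rate at least $c_0$ per unit length, so $f(x) \ge c_0(\|x\| - 2R)$ plus the value gained inside $\overline{\mathbb{B}}^n(0,2R)$, which is then bounded below using the compact-ball estimate. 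The rest of the proof is bookkeeping with the two constants and the observation that outside a bounded set ${\rm dist}(x, S_F)$ and $\|x\|$ are comparable.
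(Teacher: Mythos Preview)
Your overall strategy matches the paper's exactly: split $\mathbb{R}^n$ into a large ball and its complement, use Proposition~\ref{CompactErrorBound} on the ball to get the $\big(f(x)\big)^{2/\mathscr R}$ term, and use goodness at infinity on the complement to get the linear $f(x)$ term, then take the minimum of the two constants. Your compactness argument for $S_F$ is also the same as the paper's.

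The one place where you diverge---and where you yourself flag the obstacle---is the passage from the slope bound ${\frak m}_f(x)\ge c_0$ on $\{\|x\|\ge R\}$ to a linear error bound $f(x)\ge c'\,{\rm dist}(x,S_F)$ for $\|x\|$ large. Your proposed ``global version of Lemma~\ref{Lemma22}'' via path integration or gradient flow is heuristically right but is not immediately available for a nonsmooth locally Lipschitz function, and the patching you describe across the sphere $\|x\|=2R$ (first getting ${\rm dist}(x,\overline{\mathbb B}^n(0,R))$ and then upgrading to ${\rm dist}(x,S_F)$) would still need a rigorous descent estimate. The paper bypasses all of this with a direct argument by contradiction using Ekeland's variational principle: assuming some $\bar x$ with $\|\bar x\|\ge 3R+2\|s\|$ and $f(\bar x)<\frac{c_0}{2}{\rm dist}(\bar x,S_F)$, Ekeland applied to $f$ on the closed set $\{\|x\|\ge R\}$ with $\epsilon=\frac{c_0}{2}{\rm dist}(\bar x,S_F)$ and $\lambda=\frac{2}{3}{\rm dist}(\bar x,S_F)$ produces a point $\bar y$ in the interior of $\{\|x\|\ge R\}$ minimizing $f(\cdot)+\frac{\epsilon}{\lambda}\|\cdot-\bar y\|$, hence ${\frak m}_f(\bar y)\le\frac{\epsilon}{\lambda}=\frac{3c_0}{4}<c_0$, a contradiction. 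This gives $\frac{c_0}{2}\,{\rm dist}(x,S_F)\le f(x)$ for $\|x\|\ge 3R+2\|s\|$ in one stroke, with no curve selection or flow needed. Replacing your path-integration sketch with this Ekeland argument makes the proof complete and essentially identical to the paper's.
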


First of all, we prove the following claim.
\begin{claim}\label{Claim8}
Assume that there exist some constants $c > 0$ and $R > 0$ such that
\begin{equation*}
{\frak m}_{f} (x) \ge c \quad \textrm{ for all } \quad \|x\| \ge R.
\end{equation*}
Let $s \in S_F.$ Then
\begin{equation*}
\frac{c}{2}\, \mathrm{dist}(x, S_F)\le f(x) \quad \textrm{ for all } \quad \|x\|\ge 3R + 2\|s\|.
\end{equation*}
\end{claim}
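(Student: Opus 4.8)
The plan is to derive Claim~\ref{Claim8} from a \emph{linear lower bound for $f$ outside a fixed ball}, namely $f(x)\ge c\,(\|x\|-R)$ for all $\|x\|\ge R$, and then to combine this with the triangle inequality. I would begin by recording an elementary consequence of $f\ge 0$ (which holds because $f$ is a smallest singular value function): every $z\in S_F$ is a global, hence local, minimizer of $f$, so $0\in\partial f(z)$ by Lemma~\ref{Lemma21}(i), whence ${\frak m}_f(z)=0<c$; since ${\frak m}_f(z)\ge c$ for $\|z\|\ge R$, this forces $S_F\subset\mathbb{B}^n(0,R)$. In particular $S_F$ is compact, $\|s\|<R$, and no point $x$ with $\|x\|\ge R$ can satisfy $f(x)=0$.

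The heart of the argument is the linear bound, and for it I would use Ekeland's variational principle. Fix $x$ with $\|x\|>R$ (the case $\|x\|=R$ is trivial as $f\ge 0$) and suppose, for a contradiction, that $f(x)<c\,(\|x\|-R)$; then $f(x)>0$ by the previous paragraph. Choose $\rho\in\big(f(x)/c,\ \|x\|-R\big)$, so that $\overline{\mathbb{B}}^n(x,\rho)\subset\{z:\|z\|>R\}$ and hence ${\frak m}_f\ge c$ on $\overline{\mathbb{B}}^n(x,\rho)$. Put $m:=\inf_{\overline{\mathbb{B}}^n(x,\rho)}f$ (so $m\ge 0$), pick $\epsilon\in\big(f(x)-m,\ c\rho\big)$ (a nonempty interval, since $f(x)-m\le f(x)<c\rho$) and $\rho'\in(\epsilon/c,\rho)$. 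Applying Ekeland's principle (see, e.g., \cite{Rockafellar1998, Mordukhovich2006}) to the lower semicontinuous nonnegative function $f+\iota_{\overline{\mathbb{B}}^n(x,\rho)}$ (with $\iota_S=0$ on $S$ and $+\infty$ off $S$) at the point $x$, with tolerance $\epsilon$ and step $\rho'$, yields a point $\hat x$ with $\|\hat x-x\|\le\rho'<\rho$, $\hat x\in\overline{\mathbb{B}}^n(x,\rho)$, and $f(z)+\iota_{\overline{\mathbb{B}}^n(x,\rho)}(z)>f(\hat x)-\tfrac{\epsilon}{\rho'}\|z-\hat x\|$ for all $z\ne\hat x$. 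Since $\|\hat x-x\|<\rho$, restricting this to the open ball shows $\hat x$ is a local minimizer of $z\mapsto f(z)+\tfrac{\epsilon}{\rho'}\|z-\hat x\|$, so by Lemma~\ref{Lemma21}(i)--(iii) we get $0\in\partial f(\hat x)+\tfrac{\epsilon}{\rho'}\,\overline{\mathbb{B}}^n$, i.e. ${\frak m}_f(\hat x)\le\epsilon/\rho'<c$. But $\|\hat x\|\ge\|x\|-\rho'>\|x\|-\rho>R$ forces ${\frak m}_f(\hat x)\ge c$, a contradiction; hence $f(x)\ge c\,(\|x\|-R)$ for all $\|x\|\ge R$.

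To finish, take $\|x\|\ge 3R+2\|s\|$, so in particular $\|x\|\ge 2R+\|s\|$, i.e. $2(\|x\|-R)\ge\|x\|+\|s\|$. Since $s\in S_F$, $\mathrm{dist}(x,S_F)\le\|x-s\|\le\|x\|+\|s\|\le 2(\|x\|-R)$, and the linear bound gives $\tfrac{c}{2}\,\mathrm{dist}(x,S_F)\le c\,(\|x\|-R)\le f(x)$, which is the assertion (the stated threshold $3R+2\|s\|$ is in fact more generous than needed). I expect the only genuinely delicate point to be the Ekeland step: one must ensure the selected point $\hat x$ lies \emph{strictly} inside $\overline{\mathbb{B}}^n(x,\rho)$ — which is precisely why the step $\rho'$ is taken strictly smaller than $\rho$ — so that the indicator contributes nothing to the limiting subdifferential at $\hat x$, and one must keep the inequalities ${\frak m}_f(\hat x)\le\epsilon/\rho'<c$ and $\|\hat x\|>R$ lined up; everything else is the triangle inequality together with the minimizer observation from the first paragraph.
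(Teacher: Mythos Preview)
Your proof is correct. Both arguments rest on Ekeland's variational principle together with Lemma~\ref{Lemma21}, but the organization differs. The paper argues by contradiction directly against the inequality $\frac{c}{2}\,\mathrm{dist}(x,S_F)\le f(x)$: assuming it fails at some $\bar x$ with $\|\bar x\|\ge 3R+2\|s\|$, it applies Ekeland to $f$ on the unbounded closed set $K=\{\|x\|\ge R\}$ with parameters $\epsilon=\frac{c}{2}\,\mathrm{dist}(\bar x,S_F)$ and $\lambda=\frac{2}{3}\,\mathrm{dist}(\bar x,S_F)$, obtaining a point $\bar y$ with ${\frak m}_f(\bar y)\le\frac{3c}{4}<c$ yet $\|\bar y\|>R$, a contradiction. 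You instead isolate an intermediate linear growth estimate, $f(x)\ge c(\|x\|-R)$ for all $\|x\|\ge R$, via Ekeland on a compact ball $\overline{\mathbb{B}}^n(x,\rho)$, and then close with the triangle inequality. Your route yields the slightly stronger linear bound as a byproduct and shows the threshold $3R+2\|s\|$ can be relaxed to $2R+\|s\|$; the paper's route is more direct and avoids the auxiliary parameters $\rho,\rho',\epsilon$. Both are equally rigorous, and your care in keeping $\rho'<\rho$ so that $\hat x$ lands in the interior (so the indicator contributes nothing to $\partial$) mirrors exactly the paper's need to check $\|\bar y\|>R$ so that $\bar y$ is interior to $K$.
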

\begin{proof}
We argue by contradiction. Suppose that the conclusion is false. Then there exists $\bar x \in \mathbb{R}^n$ such that $$\|\bar x\|\ge 3R + 2\|s\| \quad \textrm{ and } \quad f(\bar x)<\frac{c}{2}\textrm{dist}(\bar x, S_F).$$
Clearly $\bar x\not\in S_F$. Set $K:=\{x \in \mathbb{R}^n \ : \ \|x\|\ge R\}.$ Note that $\inf_K f(x)\ge 0$, so
$$f(\bar{x}) < \inf_K f(x) + \frac{c}{2}\textrm{dist}(\bar{x}, S_F).$$
By applying Ekeland variational principle~\cite{Ekeland1979} to the function $f(x)$ on the closed set $K$ with the data $\epsilon := \frac{c}{2}\textrm{dist}(\bar x, S_F) > 0$ and $\lambda := \frac{2\textrm{dist}(\bar x, S_F)}{3} > 0$, we assert that there is $\bar y\in K$ such that $\|\bar y - \bar x\| < {\lambda}$ and that $\bar y$ minimizes the function
$$K \rightarrow \mathbb{R}, \quad x \mapsto f(x) + \frac{\epsilon}{\lambda}\|x-\bar y\|.$$
It follows that
\begin{eqnarray*}
\|\bar y\| & \ge & \|\bar x\| - \|\bar y - \bar x\| >  \|\bar x\| - \frac{2}{3}\textrm{dist}(\bar x, S_F) \\
& \ge & \|\bar{x}\| - \frac{2}{3}\|\bar{x} - s\| \ge \|\bar{x}\| - \frac{2}{3}(\|\bar x\| + \|s\|)
= \frac{1}{3}(\|\bar{x} - 2 \|s\|) \ge R.
\end{eqnarray*}
Thus $\bar y$ is an interior point of $K.$ Then we deduce from Claim~\ref{Lemma21} that
$$0\in \partial f(\bar y) + \frac{\epsilon}{\lambda}\Bbb B^n.$$
By the definition of the function $\frak m_{f},$ it follows easily that
$$\frak m_{f} (\bar y)\leq \frac{\epsilon}{\lambda}.$$
Since $\bar x \not \in S_F$ and $\|\bar y - \bar x\| < \lambda = \frac{2}{3}\textrm{dist}(\bar x, S_F)$, we have
$\bar{y} \not \in S_F$ and so $f(\bar y)>0.$ Therefore
$$\frak m_{f} (\bar y)\leq \frac{\epsilon}{\lambda} = \frac{3c}{4} < c,$$
which is a contradiction.
\end{proof}

Now, we are in position to finish the proof of Proposition~\ref{HolderTypeTheorem}.

\begin{proof} [Proof of Proposition~\ref{HolderTypeTheorem}]
By the assumption, there exist some constants $c_1 > 0$ and $R > 0$ such that
$${\frak m}_f(x) \ge c_1 \quad \textrm{ for all } \quad \|x\| \ge R.$$
Since the function $f$ is non-negative on $\mathbb{R}^n,$ it follows from Claim~\ref{Lemma21}(i) that if $x \in S_F$ then $0 \in \partial f(x)$ and so ${\frak m}_f(x) = 0.$ Consequently, $S_F \subset \{x \in \mathbb{R}^n \ : \ \|x\| \le R \}$ and hence $S_F$ is a compact set.

Now, let us fix a point $s$ in $S_F.$ Due to Claim~\ref{Claim8}, we obtain
\begin{equation}\label{nu1}
\frac{c_1}{2}\, \mathrm{dist}(x, S_F) \le f(x), \quad \text{ for all } \quad  \|x\|\ge 3R + 2\|s\|.
\end{equation}
On the other hand, thanks to Proposition~\ref{CompactErrorBound}, we get a constant $c_2 > 0$ satisfying
\begin{equation}\label{nu2}
c_2 \, \mathrm{dist}(x, S_F)\leq \Big(f(x)\Big)^{\frac{2}{\mathscr R(n+p,2d+2)}}, \quad \textrm{ for all }  \ \|x\| \le 3R + 2\|s\|.
\end{equation}
Let $c := \min \{\frac{c_1}{2}, c_2\} > 0.$  Taking account of Inequalities~(\ref{nu1})~and~(\ref{nu2}), we obtain
$$c \, \mathrm{dist}(x,S_F)\leq \Big(f(x)\Big)^{\frac{2}{\mathscr R(n+p,2d+2)}} + f(x), \quad \textrm{for all } \quad x \in \Bbb R^n,$$
as it was to be shown.
\end{proof}

\noindent\textbf{Acknowledgment.} This research was partially performed while the authors had been visiting at Vietnam Institute for Advanced Study in Mathematics (VIASM). The authors would like to thank the Institute for hospitality and support.

\end{document}